\theoremstyle{plain}
\newtheorem{theorem}{Theorem}[section]
\newtheorem{proposition}[theorem]{Proposition}
\newtheorem{lemma}[theorem]{Lemma}
\newtheorem{corollary}[theorem]{Corollary}
\theoremstyle{definition}
\newtheorem{definition}[theorem]{Definition}
\newtheorem{remark}[theorem]{Remark}
\newcommand{\calA}{\mathcal{A}}
\newcommand{\calE}{\mathcal{E}}
\newcommand{\calF}{\mathcal{F}}
\newcommand{\calP}{\mathcal{P}}
\newcommand{\calQ}{\mathcal{Q}}
\newcommand{\calT}{\mathcal{T}}
\newcommand{\calV}{\mathcal{V}}
\newcommand{\calW}{\mathcal{W}}
\newcommand{\boldC}{\mathbf{C}}
\newcommand{\boldD}{\mathbf{D}}
\newcommand{\Cb}{\mathbf{C}^{\mathrm{b}}}
\newcommand{\Cperf}{\mathbf{C}^{\mathrm{perf}}}
\newcommand{\Db}{\mathbf{D}^{\mathrm{b}}}
\newcommand{\Dperf}{\mathbf{D}^{\mathrm{perf}}}
\newcommand{\Kb}{\mathbf{K}^{\mathrm{b}}}
\newcommand{\Aut}{\mathrm{Aut}}
\newcommand{\cone}{\mathrm{cone}}
\newcommand{\ev}{\mathrm{ev}}
\newcommand{\Hom}{\mathrm{Hom}}
\newcommand{\id}{\mathrm{id}}
\newcommand{\iso}{\mathrm{iso}}
\newcommand{\modfg}{\textrm{-}\mathrm{mod}}
\newcommand{\objects}{\mathrm{obj}}
\newcommand{\od}{\mathrm{od}}
\newcommand{\proj}{\textrm{-}\mathrm{proj}}
\newcommand{\qis}{\mathrm{qis}}
\newcommand{\tri}{\mathrm{tri}}
\begin{document}

\title[Determinants of perfect complexes and Euler
characteristics]{Determinants of perfect complexes and Euler
characteristics in relative $K_0$-groups}
\author{Manuel Breuning}
\address{Department of Mathematics, King's College London, Strand,
London WC2R 2LS, United Kingdom} \email{manuel.breuning@kcl.ac.uk}
\date{December 8, 2008}

\begin{abstract}
We study the $K_0$ and $K_1$-groups of exact and triangulated
categories of perfect complexes, and we apply the results to show
how determinant functors on triangulated categories can be used for
the construction of Euler characteristics in relative algebraic
$K_0$-groups.
\end{abstract}

\maketitle

\section{Introduction}

Let $R$ be a ring and $P$ a cochain complex of finitely generated
projective $R$-modules. Then the Euler characteristic of $P$ is
defined to be the element $\chi(P)=\sum_{i} (-1)^i[P^i]$ in the
Grothendieck group $K_0(R)$. The definition of this Euler
characteristic can easily be generalized to perfect complexes of
$R$-modules, i.e.\ complexes which in the derived category become
isomorphic to bounded complexes of finitely generated projective
$R$-modules.

Now suppose that we have a ring homomorphism $R\to S$. Recall that then
there exists an exact sequence of $K$-groups
\begin{equation*}
K_1(R) \longrightarrow K_1(S) \longrightarrow K_0(R,S)
\longrightarrow K_0(R) \longrightarrow K_0(S),
\end{equation*}
where $K_0(R,S)$ is Swan's relative $K_0$-group. Let $P$ be a
perfect complex of $R$-modules for which $\chi(P)$ lies in the
kernel of $K_0(R)\to K_0(S)$. In this situation, can we find a
canonical preimage of $\chi(P)$ in $K_0(R,S)$? In certain cases this
is indeed possible, provided the complex $S\otimes_R P$ is endowed
with some natural additional data, and in recent years such refined
Euler characteristics in $K_0(R,S)$ have found applications in
arithmetic algebraic geometry. Sometimes they can be constructed
using only elementary methods (see \cite{Burns04}), however a more
general and conceptual approach is the use of determinant functors
on exact categories of complexes (see \cite{BurnsFlach01},
\cite{BreuningBurns05}).

In the applications, the relevant complexes often lie in derived
categories and not in exact categories. Therefore it would be more
natural to use determinant functors on triangulated categories (as
defined in \cite{Breuning06}) for the construction of Euler
characteristics in $K_0(R,S)$. The purpose of this paper is to show
that this is indeed possible.

The crucial observation is that the $K$-groups of a triangulated
category of perfect complexes are closely related to classical
$K$-groups. To state the precise result, we let $R$ be a ring and
denote the triangulated category of perfect complexes of $R$-modules
by $\Dperf(R)$. In \cite{Breuning06} the $K$-groups
$K_i\big(\Dperf(R)\big)$ for $i=0$ and $1$ were defined in terms of
a universal determinant functor on $\Dperf(R)$. These groups are
related to the usual $K$-groups of $R$ as follows.

\begin{theorem}
\label{theorem_intro} There exist canonical homomorphisms
\begin{equation*}
K_i(R)\longrightarrow K_i\big(\Dperf(R)\big)
\end{equation*}
for $i=0$ and $1$. It is an isomorphism for $i=0$ and surjective for
$i=1$. If $R$ is regular then it is an isomorphism for $i=1$.
\end{theorem}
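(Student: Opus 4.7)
The plan is to build the canonical homomorphisms from the universal determinant functor on $\Dperf(R)$ of \cite{Breuning06}, then analyse surjectivity and injectivity for $i=0$ and $i=1$ separately. Denote a universal determinant functor by $[\,\cdot\,]\colon\Dperf(R)\to V$, so that by definition $K_i(\Dperf(R))$ identifies with $\pi_i(V)$. Viewing a finitely generated projective $R$-module $P$ as a perfect complex concentrated in degree zero gives $[P]\in V$, and an automorphism $\alpha$ of $P$ gives an automorphism $[\alpha]$ of $[P]$. The relations defining classical $K_0(R)$ (additivity on short exact sequences) and $K_1(R)$ (multiplicativity under composition) hold in the image because short exact sequences of projectives are distinguished triangles in $\Dperf(R)$, and the determinant-functor axioms include the corresponding additivity on triangles and multiplicativity on compositions. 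For $i=0$, any $X\in\Dperf(R)$ is quasi-isomorphic to a strictly bounded complex $P^\bullet$ of finitely generated projectives, and iterated stupid truncations give distinguished triangles yielding $[X]=\sum_i(-1)^i[P^i]$ in $K_0(\Dperf(R))$, which proves surjectivity; injectivity follows because the classical Euler characteristic $\chi\colon K_0(\Dperf(R))\to K_0(R)$ is a well-defined left inverse.

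For $i=1$ surjectivity, I would argue as follows. Given $(X,\alpha)$, first replace $X$ by a strictly bounded complex $P^\bullet$ of finitely generated projectives. Next, realise $\alpha$ by a chain-level automorphism: $\alpha$ is a priori a roof of chain maps, but since a quasi-isomorphism between bounded complexes of projectives has contractible cone, standard mapping-cylinder constructions allow us to replace the roof by an honest chain-level automorphism of $P^\bullet$ without changing the class in $K_1(\Dperf(R))$. The stupid-truncation distinguished triangles are then compatible with this chain-level automorphism, and iterated application of the additivity axiom on commutative diagrams of distinguished triangles with vertical automorphisms gives $[(P^\bullet,\alpha)]=\sum_i(-1)^i[(P^i,\alpha^i)]$ in $K_1(\Dperf(R))$, which visibly lies in the image of $K_1(R)$.

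For regular $R$, the inverse $K_1(\Dperf(R))\to K_1(R)$ would be defined by this same alternating-product formula on chain-level representatives; regularity is used to guarantee that an arbitrary distinguished triangle equipped with compatible derived automorphisms can be modelled by a short exact sequence of bounded complexes of projectives with compatible chain automorphisms, which one needs in order to check that the formula respects the triangle-additivity relation in $K_1(\Dperf(R))$. The main obstacle, in my view, is the reduction from derived to chain-level automorphisms in the $i=1$ surjectivity argument, where one must manage roofs and homotopies so that the resulting chain-level automorphism genuinely represents the same class in $K_1(\Dperf(R))$; a closely related difficulty is verifying well-definedness of the inverse in the regular case, where one needs a resolution-theorem-type argument to realise arbitrary distinguished triangles by chain-level short exact sequences of projectives.
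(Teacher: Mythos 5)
The central gap is in your $i=1$ surjectivity argument. You begin with ``given $(X,\alpha)$'', i.e.\ you only treat classes in $K_1\big(\Dperf(R)\big)$ of the form $g_1(\alpha)$ for a derived automorphism $\alpha$ of a single object, where $g=(g_1,g_2):\Dperf(R)\to\calW$ is a universal determinant functor. But $K_1\big(\Dperf(R)\big)=\pi_1(\calW)$ consists of arbitrary automorphisms in the universal Picard category, which are composites of morphisms $g_1(a)$, morphisms $g_2(\Delta)$ for distinguished triangles $\Delta$, and the monoidal structure maps; nothing guarantees that every such automorphism is of the form $g_1(\alpha)$, so even a complete treatment of the pairs $(X,\alpha)$ would not prove surjectivity. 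The paper obtains surjectivity by an entirely formal mechanism: it first proves $K_i(R)\cong K_i\big(\Cperf(R),\qis\big)$ (Knudsen's theorem for $\Cb(R\proj)$ together with the extension to $\Cperf(R)$), and then constructs a universal determinant functor on $\Dperf(R)$ by passing to a quotient Picard category of the universal one for $(\Cperf(R),\qis)$, imposing exactly the relations forced by isomorphisms of triangles in $\Dperf(R)$ and by the associativity (octahedron) axiom; since a quotient functor is surjective on morphisms and bijective on objects, it is bijective on $\pi_0$ and surjective on $\pi_1$ (Lemma \ref{lemma_quotient}), which gives the $i=0$ isomorphism and the $i=1$ surjectivity simultaneously. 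Moreover, even within your strategy the chain-level reduction is not correct as stated: a derived automorphism of a bounded complex $P^\bullet$ of finitely generated projectives is represented by an honest chain map only up to homotopy, and that chain map is a homotopy equivalence, not an isomorphism of complexes, so the degreewise classes $(P^i,\alpha^i)$ are not defined; replacing $P^\bullet$ by a mapping cylinder changes the object and leaves you with the same comparison problem.

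For the regular case, your proposed inverse via the alternating formula on chain-level representatives is precisely where the difficulty is concentrated, and regularity does not enter where you say it does: a distinguished triangle can be modelled by a short exact sequence of complexes over any ring; what fails in general is lifting isomorphisms of triangles in the derived category to the chain level, and such isomorphisms impose additional relations on $K_1$. The example in \S\ref{section_example} (with $R=k[\varepsilon]/(\varepsilon^2)$) shows these relations can be nontrivial: $\id$ and $1+\varepsilon$ become equal in $K_1\big(\Dperf(R)\big)$ although they are distinct in $K_1(R)$, so any well-definedness proof for your inverse must use regularity in an essential, nonroutine way. The paper instead proves the regular case by factoring $\det\big(\Dperf(R),\calP\big)\to\det(R\proj,\calP)$ through $\det\big(\Db(R\modfg),\calP\big)$ and $\det(R\modfg,\calP)$, using the equivalence $\Db(R\modfg)\simeq\Dperf(R)$ (this is where regularity is used), the theorem of the heart, and Quillen's resolution theorem. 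Your $i=0$ argument (truncations for surjectivity, the classical Euler characteristic as a retraction) is a reasonable alternative to the paper's route, provided you justify that $K_0\big(\Dperf(R)\big)$ in the determinant-functor sense has the expected presentation so that $\chi$ is well defined on it; but the $i=1$ parts need to be reworked along the lines above.
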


The proof of this result involves studying the relation between
determinant functors on the exact category of finitely generated
projective $R$-modules, the exact category of perfect complexes of
$R$-modules, and the triangulated category of perfect complexes of
$R$-modules. Using this theorem it is then not difficult to show
that in the construction of Euler characteristics in a relative
$K_0$-group the determinant functors on exact categories can be
replaced by determinant functors on triangulated categories.

This paper is structured as follows. We study determinant functors
on exact and triangulated categories of perfect complexes in \S
\ref{section_perfect_exact} and \S
\ref{section_perfect_triangulated} respectively. In \S
\ref{section_homotopy_fibre} we
recall the homotopy fibre of a monoidal functor of Picard categories.
Finally in \S \ref{section_Euler_triangulated} we
explain the construction of Euler characteristics
using determinant functors on triangulated categories.

In \S \ref{section_example} we will give an example of a non-regular
ring $R$ for which the canonical surjection $K_1(R)\to
K_1\big(\Dperf(R)\big)$ from Theorem \ref{theorem_intro} is not
injective. Furthermore using the same example we will show that the
isomorphism $K_1(\calA)\cong K_1\big(\Db(\calA)\big)$ which was
proved in \cite[\S 5]{Breuning06} for abelian categories $\calA$
does not generalize to exact categories. The results from \S
\ref{section_example} are not used in the subsequent sections.

We refer the reader to \cite{Breuning06} for the definition and
properties of determinant functors on exact and triangulated
categories. Moreover we use the same notations as in
\cite{Breuning06}, in particular we recall that if $\calP$ is a
Picard category, $\calE$ is an exact category and $w$ a class of
morphism in $\calE$ which contains all isomorphisms and is closed
under composition then $\det((\calE,w),\calP)$ denotes the category
whose objects are the determinant functors $f=(f_1,f_2):
(\calE,w)\to\calP$, and we omit $w$ from the notation if $w=\iso$ is
the class of all isomorphisms.

\section{Determinant functors on exact categories of perfect
complexes} \label{section_perfect_exact}

Let $R$ be a ring (associative with unit). Unless otherwise stated
all $R$-modules will be left $R$-modules, and all complexes of
$R$-modules will be cochain complexes of left $R$-modules. The
abelian category of all complexes of $R$-modules will be denoted by
$\boldC(R)$. If $M$ is an $R$-module then $M[0]$ denotes the complex
which consists of the module $M$ in degree $0$ and of zero modules
in all other degrees.

Let $\calE$ be a full exact subcategory of $\boldC(R)$. A morphism
in $\calE$ will be called a quasi-isomorphism if it becomes a
quasi-isomorphism after embedding in $\boldC(R)$, i.e.\ if it
induces an isomorphism on cohomology. The class of all
quasi-isomorphisms in $\calE$ will be denoted by $\qis$. In the
following we will frequently use the following lemma (which is
proved in the same way as \cite[Corollary 2.12]{Knudsen02}).

\begin{lemma}
Assume that $\calE$ is a full exact subcategory of $\boldC(R)$ which
is closed under mapping cones (i.e.\ if $a$ is any morphism in
$\calE$ then $\cone(a)$ is an object of $\calE$). Let $f=(f_1,f_2):
(\calE,\qis)\to\calP$ be a determinant functor. Then for any
homotopic quasi-isomorphisms $a$ and $b$ in $\calE$ we have
$f_1(a)=f_1(b)$ in $\calP$.
\end{lemma}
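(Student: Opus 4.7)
The strategy follows Knudsen \cite[Cor.\ 2.12]{Knudsen02}: homotopic chain maps have mapping cones that are \emph{isomorphic as complexes} via an explicit chain isomorphism determined by the homotopy, and this isomorphism is compatible with the tautological short exact sequences of cones. Let $a,b:X\to Y$ be homotopic quasi-isomorphisms in $\calE$ with homotopy $h=(h^n:X^n\to Y^{n-1})_n$ satisfying $a-b=d_Y h+h d_X$. Since $\calE$ is closed under cones, both $\cone(a)$ and $\cone(b)$ lie in $\calE$, and a direct computation (using exactly the homotopy identity) shows that
$$\sigma_h \;:=\; \begin{pmatrix} \id_Y & h \\ 0 & \id_X \end{pmatrix} \;:\; \cone(a)\longrightarrow \cone(b)$$
is a chain isomorphism (in particular a quasi-isomorphism in $\calE$), fitting into a commutative diagram of short exact sequences
$$\xymatrix{ 0 \ar[r] & Y \ar[r] \ar@{=}[d] & \cone(a) \ar[r] \ar[d]^{\sigma_h} & X[1] \ar[r] \ar@{=}[d] & 0 \\ 0 \ar[r] & Y \ar[r] & \cone(b) \ar[r] & X[1] \ar[r] & 0 }$$
whose outer vertical maps are identities.

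The next step is to feed this through $f=(f_1,f_2)$. From each row, $f_2$ produces a canonical isomorphism $f_1(\cone(a)) \cong f_1(Y)\cdot f_1(X[1])$ and likewise for $\cone(b)$. The compatibility of $f_2$ with morphisms of short exact sequences, applied to the diagram above, forces the two identifications to be intertwined by $f_1(\sigma_h)$, which after these identifications is the identity in $\calP$. Combining this with the axiom of a determinant functor on $(\calE,\qis)$ that characterizes $f_1$ of a quasi-isomorphism in terms of the trivialization of its acyclic mapping cone, one reads off $f_1(a)=f_1(b)$.

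\textbf{Main obstacle.} The crux is to unpack the axioms from \cite{Breuning06} so as to express $f_1(a)$ concretely in terms of the short exact sequence $0\to Y\to\cone(a)\to X[1]\to 0$ together with the trivialization of the acyclic complex $\cone(a)$; once this characterization of $f_1$ on quasi-isomorphisms is in place, the proof becomes formal. A cleaner variant would construct a mapping cylinder $\mathrm{Cyl}(X)\in\calE$ (built from cones and direct sums, hence in $\calE$) with inclusions $i_0,i_1:X\rightrightarrows\mathrm{Cyl}(X)$ and projection $p:\mathrm{Cyl}(X)\to X$ satisfying $pi_0=pi_1=\id_X$; the homotopy $h$ extends to a quasi-isomorphism $H:\mathrm{Cyl}(X)\to Y$ with $Hi_0=a$ and $Hi_1=b$, and then $f_1(a)=f_1(b)$ follows from $f_1(i_0)=f_1(i_1)$, which is immediate from functoriality of $f_1$ on quasi-isomorphisms since $f_1(p)$ is an isomorphism.
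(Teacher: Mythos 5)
The paper gives no proof of this lemma beyond the pointer to \cite[Corollary 2.12]{Knudsen02}, so the comparison is really with Knudsen's argument. Your main line of attack (the chain isomorphism $\sigma_h:\cone(a)\to\cone(b)$ together with naturality of $f_2$) stalls exactly where you say it does, and that stall is a genuine gap, not a routine unpacking: there is no axiom of a determinant functor on $(\calE,\qis)$ that expresses $f_1(a)$ for a quasi-isomorphism $a$ in terms of the short exact sequence $0\to Y\to\cone(a)\to X[1]\to 0$ and a trivialization of the acyclic complex $\cone(a)$. Establishing such a formula (and the canonicity of the trivialization) is precisely the content of the chain of lemmas preceding Knudsen's Corollary 2.12, so invoking it here is circular as written. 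What you do get for free from naturality of $f_2$ applied to your diagram is the identity $f_1(\sigma_h)=f_2(\Delta_b)^{-1}\circ f_2(\Delta_a)$; the missing link is relating either side to $f_1(a)$ and $f_1(b)$.

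By contrast, the ``cleaner variant'' you relegate to a closing remark is a complete and correct proof, and it is essentially the argument the citation points to; it should be your main proof. Concretely: $\mathrm{Cyl}(X)\cong\cone\big(X\xrightarrow{(\id,-\id)}X\oplus X\big)$ lies in $\calE$ because $\calE$ is additive and closed under cones; the maps $i_0,i_1,p$ and the map $H:\mathrm{Cyl}(X)\to Y$ determined by $a$, $b$ and the homotopy $h$ are all quasi-isomorphisms between objects of the full subcategory $\calE$, hence lie in $\qis$; and then
\begin{equation*}
f_1(p)\circ f_1(i_0)=f_1(\id_X)=f_1(p)\circ f_1(i_1)
\end{equation*}
forces $f_1(i_0)=f_1(i_1)$ since every morphism in the Picard category $\calP$ is invertible, whence $f_1(a)=f_1(H)\circ f_1(i_0)=f_1(H)\circ f_1(i_1)=f_1(b)$. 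Note that this route uses only the functoriality of $f_1$ on the subcategory of quasi-isomorphisms and never touches $f_2$, which is why it avoids the obstacle entirely.
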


We remark that the assumptions of the lemma are satisfied for the
subcategories $\Cb(R\proj)$, $\Cb(R\modfg)$ and $\Cperf(R)$ which
are considered later in this paper.

Let $R$-proj denote the category of finitely generated projective
$R$-modules and $\Cb(R\proj)$ the category of bounded complexes of
objects of $R\proj$. The categories $R\proj$ and $\Cb(R\proj)$ are
both exact and the embedding $R\proj\to\Cb(R\proj)$, $M\mapsto
M[0]$, is an exact functor. For every Picard category $\calP$ we
obtain an induced functor
\begin{equation}
\label{equation_proj_exact} \det\big((\Cb(R\proj),\qis),\calP\big)
\longrightarrow \det(R\proj,\calP)
\end{equation}
which by \cite[Theorem 2.3]{Knudsen02} is an equivalence of
categories.

\begin{definition}
A complex $A$ of $R$-modules is said to be \textit{perfect} if there
exists a bounded complex of finitely generated projective
$R$-modules $P$ and a quasi-isomorphism $P\to A$.
\end{definition}

The full subcategory of $\boldC(R)$ consisting of all perfect
complexes will be denoted by $\Cperf(R)$. Note that $\Cperf(R)$ is
an exact category and that the embedding $\Cb(R\proj)\to\Cperf(R)$
is an exact functor. Hence for every Picard category $\calP$ we
obtain an induced functor
\begin{equation}
\label{equation_perfect_exact} \det\big((\Cperf(R),\qis),\calP\big)
\longrightarrow \det\big((\Cb(R\proj),\qis),\calP\big).
\end{equation}

\begin{lemma}
\label{lemma_perfect_exact} The functor
(\ref{equation_perfect_exact}) is an equivalence of categories.
\end{lemma}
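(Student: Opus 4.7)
The plan is to construct an essentially inverse to \eqref{equation_perfect_exact} by extending determinant functors along chosen projective resolutions. Given $g=(g_1,g_2)$ in $\det((\Cb(R\proj),\qis),\calP)$, I will build $f=(f_1,f_2)$ on $(\Cperf(R),\qis)$ with values in $\calP$, verify that the restriction of $f$ along \eqref{equation_perfect_exact} is canonically isomorphic to $g$, and check that any two such extensions are canonically isomorphic. This simultaneously yields essential surjectivity and fully faithfulness of \eqref{equation_perfect_exact}.

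For each perfect complex $A$, I would choose once and for all a quasi-isomorphism $q_A:P_A\to A$ with $P_A\in\Cb(R\proj)$, taking $q_A=\id_A$ whenever $A\in\Cb(R\proj)$, and set $f_1(A):=g_1(P_A)$. For a quasi-isomorphism $a:A\to B$ in $\Cperf(R)$, standard homological algebra produces a quasi-isomorphism $\widetilde{a}:P_A\to P_B$ in $\Cb(R\proj)$, unique up to chain homotopy, satisfying $q_B\circ\widetilde{a}\simeq a\circ q_A$; put $f_1(a):=g_1(\widetilde{a})$. Independence of the lift follows from the lemma stated just before the definition, applied inside $\Cb(R\proj)$.

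To define $f_2$ on a short exact sequence $0\to A\to B\to C\to 0$ of perfect complexes, I would apply a horseshoe-type lift to the chosen resolutions $q_A$ and $q_C$, yielding a short exact sequence $0\to P_A\to P'\to P_C\to 0$ in $\Cb(R\proj)$ and a quasi-isomorphism $q':P'\to B$ compatible with the inclusion and projection. A chain-homotopy lift $P'\to P_B$ of the identity on $B$ provides an isomorphism $g_1(P')\cong g_1(P_B)=f_1(B)$ in $\calP$, and $f_2$ of the original sequence is defined by combining $g_2$ of $0\to P_A\to P'\to P_C\to 0$ with this correction isomorphism. The uniqueness-of-morphisms half is then easy: any natural transformation $\eta:f\to f'$ between determinant functors on $\Cperf(R)$ is determined by its restriction to $\Cb(R\proj)$ via naturality at $q_A$ through $\eta_A=f'_1(q_A)\circ\eta_{P_A}\circ f_1(q_A)^{-1}$, and conversely this formula extends any given $\eta$ on $\Cb(R\proj)$ to all of $\Cperf(R)$.

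The main obstacle will be showing that $f_2$ so defined satisfies the axioms of a determinant functor — in particular, independence of the horseshoe lift, compatibility with quasi-isomorphisms and isomorphisms of short exact sequences, and the coherence axiom on $3\times 3$ diagrams. Each reduces, using the projectivity of the terms of the $P_A$, to a simultaneous lift of the relevant diagram in $\Cperf(R)$ to $\Cb(R\proj)$ (up to quasi-isomorphism), at which point the axiom for $f_2$ becomes an identity in $\calP$ coming from the axioms for $g_2$ together with the lemma on homotopic quasi-isomorphisms. The pattern is parallel to the proof of \cite[Theorem 2.3]{Knudsen02} for \eqref{equation_proj_exact}, with $\Cb(R\proj)\hookrightarrow\Cperf(R)$ playing the role there occupied by $R\proj\hookrightarrow\Cb(R\proj)$.
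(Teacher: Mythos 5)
Your proposal follows essentially the same route as the paper's proof: both construct a quasi-inverse to \eqref{equation_perfect_exact} by fixing resolutions $q_C:P_C\to C$ with $P_C\in\Cb(R\proj)$, defining $f_1$ via homotopy lifts of quasi-isomorphisms, and defining $f_2$ by resolving a short exact sequence inside $\Cb(R\proj)$ and correcting by the images of comparison quasi-isomorphisms. The only cosmetic difference is that the paper takes an arbitrary resolving sequence $0\to A'\to B'\to C'\to 0$ together with comparison maps $u,v,w$ from $P_A,P_B,P_C$ rather than insisting on a horseshoe lift whose end terms are literally $P_A$ and $P_C$; both versions leave the verification of the determinant-functor axioms at the same level of detail.
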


\begin{remark}
Before sketching the proof of this lemma we recall a useful property
of objects in $\Cb(R\proj)$ which will be used repeatedly in this
paper. Let $s: A\to B$ be a quasi-isomorphism in $\boldC(R)$ and let
$b: P\to B$ be any morphism of complexes where $P$ is an object in
$\Cb(R\proj)$ (or, more generally, $P$ is a bounded above complex of
projective $R$-modules). Then there exists a morphism of complexes
$a: P\to A$ such that the diagram
\begin{equation*}
\xymatrix{ & A \ar[d]^s \\ P \ar@{-->}[ur]^a \ar[r]^b & B }
\end{equation*}
commutes up to homotopy. Furthermore this morphism $a$ is unique up
to homotopy.
\end{remark}

\begin{proof}[Proof of Lemma \ref{lemma_perfect_exact}]
This is essentially known (see e.g.\ \cite[Theorem
2]{KnusemMumford76}), so we only sketch the main idea. For every
perfect complex $C$ we fix a quasi-isomorphism $q_C: P_C\to C$ where
$P_C$ is in $\Cb(R\proj)$. Then we can construct a functor
\begin{equation}
\label{equation_exact_inverse}
\det\big((\Cb(R\proj),\qis),\calP\big) \longrightarrow
\det\big((\Cperf(R),\qis),\calP\big)
\end{equation}
as follows. Given a determinant functor $f=(f_1,f_2):
(\Cb(R\proj),\qis) \to \calP$ we define $g_1:
\Cperf(R)_{\qis}\to\calP$ by $g_1(C):=f_1(P_C)$ and for a
quasi-isomorphism $a: C\to D$ by $g_1(a):=f_1(b)$ where $b: P_C\to
P_D$ is a quasi-isomorphism such that $q_D\circ b$ is homotopic to
$a\circ q_C$ (this is well-defined because such a map $b$ is unique
up to homotopy). If $\Delta: 0\to A\to B\to C\to 0$ is a short exact
sequence in $\Cperf(R)$ then there exists a short exact sequence
$\Delta': 0\to A'\to B'\to C'\to 0$ in $\Cb(R\proj)$ and a
commutative diagram
\begin{equation*}
\xymatrix{ 0 \ar[r] & A' \ar[r] \ar[d]^a & B' \ar[r] \ar[d]^b & C'
\ar[r] \ar[d]^c & 0 \\
0 \ar[r] & A \ar[r] & B \ar[r] & C \ar[r] & 0 }
\end{equation*}
where the vertical maps are quasi-isomorphisms. Furthermore there
are quasi-isomorphisms $u: P_A\to A'$, $v: P_B\to B'$ and $w: P_C\to
C'$ such that $a\circ u$, $b\circ v$ and $c\circ w$ are homotopic to
$q_A$, $q_B$ and $q_C$ respectively. We define $g_2(\Delta):
g_1(B)\to g_1(A)\otimes g_1(C)$ to be $g_2(\Delta):=(f_1(u)\otimes
f_1(w))^{-1}\circ f_2(\Delta')\circ f_1(v)$. One can verify that
$g_2(\Delta)$ is well-defined and that $(g_1,g_2)$ is a determinant
functor in $\det\big((\Cperf(R),\qis),\calP\big)$.

It is obvious how to define the functor
(\ref{equation_exact_inverse}) on morphisms of determinant functors.
One then easily checks that this functor
(\ref{equation_exact_inverse}) is a quasi-inverse of the functor
(\ref{equation_perfect_exact}).
\end{proof}

\begin{corollary}
\label{corollary_perfect_exact} For every Picard category $\calP$
the canonical functor
\begin{equation*}
\det\big((\Cperf(R),\qis),\calP\big) \to \det(R\proj,\calP)
\end{equation*}
which is induced by the embedding $R\proj\to\Cperf(R)$ is an
equivalence of categories. Hence the canonical map
\begin{equation*}
K_i(R)\longrightarrow K_i\big(\Cperf(R),\qis\big)
\end{equation*}
is an isomorphism.
\end{corollary}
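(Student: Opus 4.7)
The plan is to obtain the equivalence by composing two equivalences of categories of determinant functors that have already been established, and then to deduce the statement on $K$-groups formally from the universal property.

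First I would observe that the embedding $R\proj \to \Cperf(R)$ factors through the embedding $R\proj \to \Cb(R\proj) \to \Cperf(R)$, and that both intermediate embeddings are exact. Consequently, the functor of determinant functor categories induced by $R\proj \to \Cperf(R)$ factors as the composition of the functor (\ref{equation_perfect_exact}) (which is an equivalence by Lemma \ref{lemma_perfect_exact}) followed by the functor (\ref{equation_proj_exact}) (which is an equivalence by \cite[Theorem 2.3]{Knudsen02}). The composition of two equivalences of categories is an equivalence, which gives the first claim. The main bookkeeping point, and really the only non-trivial thing to verify, is that the composition is naturally isomorphic to the canonical functor induced by $R\proj \to \Cperf(R)$; this follows from the explicit form of the quasi-inverses constructed in the proof of Lemma \ref{lemma_perfect_exact} (one may take the distinguished resolution $q_{M[0]}\colon P_{M[0]} \to M[0]$ to be the identity for $M$ in $R\proj$).

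For the second claim, I would invoke the definition of the groups $K_i$ in terms of a universal determinant functor, as recalled at the end of the introduction. By definition, $K_0(\calE,w)$ and $K_1(\calE,w)$ are respectively $\pi_0$ and $\pi_1$ of the Picard category $\calV(\calE,w)$ which represents the $2$-functor $\calP \mapsto \det((\calE,w),\calP)$; for $\calE = R\proj$ with $w = \iso$ this recovers the classical groups $K_0(R)$ and $K_1(R)$. The equivalence of categories of determinant functors established in the first part is natural in $\calP$, so by the Yoneda-type uniqueness of the universal determinant functor it is induced by an equivalence of Picard categories $\calV(\Cperf(R),\qis) \simeq \calV(R\proj)$.

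Taking $\pi_i$ for $i = 0, 1$ of this equivalence yields the desired isomorphism $K_i(R) \to K_i(\Cperf(R),\qis)$, and since the equivalence is induced by the embedding $R\proj \to \Cperf(R)$, the resulting isomorphism is precisely the canonical map. The hardest aspect of the argument is really just the checking that all the equivalences compose to the canonical functor and that the induced map on $K$-groups is indeed the canonical one; once the universal property is invoked, no further computation is necessary.
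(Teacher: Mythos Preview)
Your proposal is correct and follows essentially the same route as the paper: compose the equivalences (\ref{equation_perfect_exact}) and (\ref{equation_proj_exact}) for the first claim, then pass to $K$-groups via the universal determinant functor, which is exactly what the paper does by invoking Lemma~\ref{lemma_equivalence}. One small remark: your ``main bookkeeping point'' is a non-issue, since the embedding $R\proj\to\Cperf(R)$ is \emph{literally} the composite $R\proj\to\Cb(R\proj)\to\Cperf(R)$, and the assignment sending an exact functor to its induced functor on determinant categories is strictly functorial; hence the composite of (\ref{equation_perfect_exact}) and (\ref{equation_proj_exact}) is equal on the nose to the canonical functor, and no appeal to the quasi-inverses of Lemma~\ref{lemma_perfect_exact} is needed.
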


\begin{proof}
The first statement is immediate by composing the equivalences
(\ref{equation_proj_exact}) and (\ref{equation_perfect_exact}). The
second statement follows from the first statement and the relation
between $K$-groups and universal determinant functors, see Lemma
\ref{lemma_equivalence} below.
\end{proof}

\begin{lemma}
\label{lemma_equivalence} Let $\calE$ and $\calE'$ be exact
category. Let $w$ and $w'$ be classes of morphisms in $\calE$ and
$\calE'$ respectively which contain all isomorphisms and are closed
under composition. Let $F:\calE\to\calE'$ be an exact functor with
$F(w)\subseteq w'$. Then for every Picard category $\calP$ there is
an induced functor
\begin{equation}
\label{equation_principle_1}
\det((\calE',w'),\calP)\longrightarrow\det((\calE,w),\calP),
\end{equation}
and there are induced homomorphisms
\begin{equation}
\label{equation_principle_2}
K_i(\calE,w)\longrightarrow K_i(\calE',w')
\end{equation}
for $i=0,1$. The following are equivalent.
\begin{itemize}
\item[(i)] For every Picard category $\calP$ the
functor (\ref{equation_principle_1}) is an
equivalence of categories.
\item[(ii)] The homomorphisms (\ref{equation_principle_2}) are
isomorphisms for $i=0$ and $1$.
\end{itemize}
\end{lemma}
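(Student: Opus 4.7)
The plan is to reduce both conditions to a single statement about an induced monoidal functor between universal Picard categories, and then invoke standard facts about Picard categories.

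First I would recall from \cite{Breuning06} the universal determinant functor $u : (\calE,w) \to \calV(\calE,w)$, which is characterized by the property that composition with $u$ induces, for every Picard category $\calP$, an equivalence of categories $\mathrm{Hom}^{\otimes}(\calV(\calE,w),\calP) \xrightarrow{\sim} \det((\calE,w),\calP)$, together with the identifications $K_0(\calE,w) = \pi_0(\calV(\calE,w))$ and $K_1(\calE,w) = \pi_1(\calV(\calE,w)) = \Aut_{\calV(\calE,w)}(\mathbf{1})$. Analogous statements hold for $\calE'$. Applying the universal property of $\calV(\calE,w)$ to the determinant functor $u' \circ F : (\calE,w) \to \calV(\calE',w')$ produces a monoidal functor $\tilde{F} : \calV(\calE,w) \to \calV(\calE',w')$, unique up to unique monoidal isomorphism, such that $\tilde{F} \circ u \cong u' \circ F$.

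Next I would observe that under the equivalences from the universal property, the functor (\ref{equation_principle_1}) corresponds to precomposition with $\tilde{F}$, that is, to $\tilde{F}^* : \mathrm{Hom}^{\otimes}(\calV(\calE',w'),\calP) \to \mathrm{Hom}^{\otimes}(\calV(\calE,w),\calP)$, and that the homomorphisms (\ref{equation_principle_2}) on $K_0$ and $K_1$ are exactly the homomorphisms $\pi_0(\tilde{F})$ and $\pi_1(\tilde{F})$ induced by $\tilde{F}$. So both (i) and (ii) are intrinsic properties of the single monoidal functor $\tilde{F}$.

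I would then prove (i) $\Leftrightarrow$ (ii) by deducing each from a standard characterization of equivalences of Picard categories. Condition (i) says $\tilde{F}^*$ is an equivalence for every $\calP$; by a Yoneda argument applied in the $2$-category of Picard categories and monoidal functors (using the universal Picard categories as representing objects), this is equivalent to $\tilde{F}$ itself being an equivalence of Picard categories. Condition (ii) says $\pi_0(\tilde{F})$ and $\pi_1(\tilde{F})$ are isomorphisms, and since every object of a Picard category is invertible, a monoidal functor between Picard categories is an equivalence if and only if it induces isomorphisms on $\pi_0$ and $\pi_1$ (essential surjectivity is detected by $\pi_0$, and fully faithfulness reduces, by translating via tensor products, to bijectivity on $\Aut(\mathbf{1}) = \pi_1$). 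Combining these two characterizations gives the desired equivalence.

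The main obstacle is bookkeeping rather than conceptual: one must verify that the correspondence between (\ref{equation_principle_1}) and precomposition with $\tilde{F}$ is genuinely natural in $\calP$ so that the Yoneda step applies, and one must be careful about the $2$-categorical nature of Yoneda for Picard categories (equivalences, not isomorphisms, of hom-groupoids). Once these formalities are in place, the equivalence of (i) and (ii) is immediate.
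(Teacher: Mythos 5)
Your proposal is correct and follows essentially the same route as the paper: both reduce the statement to the induced monoidal functor $\tilde F:\calV(\calE,w)\to\calV(\calE',w')$ between the universal Picard categories, identify (\ref{equation_principle_1}) with precomposition by $\tilde F$ and (\ref{equation_principle_2}) with $\pi_i(\tilde F)$, and then use the fact that a monoidal functor of Picard categories is an equivalence if and only if it induces isomorphisms on $\pi_0$ and $\pi_1$ (which the paper cites from \cite{Breuning06} rather than reproving). The only difference is that you spell out the Yoneda-type step that the paper dismisses as ``easy to see''.
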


\begin{proof}
Let $f: (\calE,w)\to\calV$ and $f': (\calE',w')\to\calV'$ be
universal determinant functors, and let $M: \calV\to\calV'$ be a
monoidal functor such that $M\circ f\cong f'\circ F$ (by the
definition of a universal determinant functor such an $M$ exists and
is unique up to isomorphism). Then there exists the following
diagram of categories and functors
\begin{equation*}
\xymatrix@C+1cm{
\det((\calE',w'),\calP) \ar[r]^{g\mapsto g\circ F} & \det((\calE,w),\calP) \\
\Hom^\otimes(\calV',\calP) \ar[u] \ar[r]^{N\mapsto N\circ M} &
\Hom^\otimes(\calV,\calP) \ar[u] }
\end{equation*}
which commutes up to natural isomorphism. Since the vertical
functors are equivalences, it follows that the top horizontal
functor is an equivalence if and only if the bottom horizontal
functor is an equivalence. Clearly the top horizontal functor is the
functor (\ref{equation_principle_1}). On the other hand it is easy
to see that the bottom horizontal functor is an equivalence for all
$\calP$ if and only if the monoidal functor $M:\calV\to\calV'$ is an
equivalence of Picard categories, and by \cite[Lemma
2.2]{Breuning06} this is the case if and only if
$K_i(\calE,w)=\pi_i(\calV)\xrightarrow{\pi_i(M)}\pi_i(\calV')=K_i(\calE',w')$
is an isomorphism for $i=0$ and $i=1$.
\end{proof}

We remark that a statement similar to Lemma \ref{lemma_equivalence}
is also valid for exact functors $F:\calT\to\calT'$ of triangulated
categories $\calT, \calT'$ and for certain functors
$F:\calE\to\calT$ from an exact category $\calE$ to a triangulated
category $\calT$.

\section{Determinant functors on triangulated categories of perfect
complexes} \label{section_perfect_triangulated}

Let $R$ be a ring. The derived category of the abelian category of
all $R$-modules will be denoted by $\boldD(R)$. Thus the objects in
$\boldD(R)$ are the complexes of $R$-modules, and a morphism $A\to
B$ in $\boldD(R)$ is an equivalence class of diagrams
$A\xleftarrow{s} C\xrightarrow{a} B$ where $a$ is any morphism of
complexes and $s$ is a quasi-isomorphism. The category $\boldD(R)$
is triangulated, where as in \cite[bottom of p.\
28]{BreuningBurns05} we choose the triangulation in which a triangle
is distinguished if it is isomorphic to a triangle of the form
$A\xrightarrow{a} B \to \cone(a)\to A[1]$, where $B\to\cone(a)$ is
the canonical inclusion and $\cone(a)\to A[1]$ is the negative of
the canonical projection. The full subcategory of $\boldD(R)$
consisting of all perfect complexes will be denoted by $\Dperf(R)$.
One easily verifies that $\Dperf(R)$ is a triangulated subcategory
of $\boldD(R)$. We let $I: \Cperf(R)\to\Dperf(R)$ denote the
canonical functor.

\begin{lemma}
\label{lemma_C_D_perf_functor} The functor $I:
\Cperf(R)\to\Dperf(R)$ induces a functor
\begin{equation*}
\begin{split}
\det\big(\Dperf(R),\calP\big) & \longrightarrow
\det\big((\Cperf(R),\qis),\calP\big), \\
g & \longmapsto g\circ I,
\end{split}
\end{equation*}
for every Picard category $\calP$, and it therefore induces
homomorphisms
\begin{equation*}
K_i\big(\Cperf(R),\qis\big)\longrightarrow K_i\big(\Dperf(R)\big)
\end{equation*}
for $i=0, 1$.
\end{lemma}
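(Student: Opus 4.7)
The plan is to construct, for every determinant functor $g=(g_1,g_2)$ on $\Dperf(R)$ with values in $\calP$, the composition $g\circ I$ as a determinant functor on $(\Cperf(R),\qis)$, then check the assignment is functorial in $g$, and finally deduce the induced maps on $K$-groups.

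For the first component, since $I$ sends quasi-isomorphisms to isomorphisms in $\Dperf(R)$ (by construction of the derived category), it restricts to a functor $\Cperf(R)_\qis\to\Dperf(R)_\iso$, and I would simply set $(g\circ I)_1:=g_1\circ I$. For the second component, to every short exact sequence $\Delta: 0\to A\xrightarrow{a}B\xrightarrow{b}C\to 0$ in $\Cperf(R)$ I attach a distinguished triangle $\tilde\Delta: A\to B\to C\to A[1]$ in $\Dperf(R)$ as follows: the canonical map $\cone(a)\to C$ induced by $b$ is a quasi-isomorphism, so combining it with the distinguished triangle $A\to B\to\cone(a)\to A[1]$ (distinguished by the triangulation chosen at the start of \S \ref{section_perfect_triangulated}, with the negative canonical projection) one obtains a distinguished triangle isomorphic to $\tilde\Delta$. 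I then define $(g\circ I)_2(\Delta):=g_2(\tilde\Delta)$.

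The determinant functor axioms for $g\circ I$ must then be deduced from those of $g$. Compatibility with morphisms of short exact sequences (and with isomorphisms of sequences) is automatic, since such a morphism induces a morphism of the associated distinguished triangles in $\Dperf(R)$. The most substantial verification is the associativity/filtration axiom for a chain of admissible monomorphisms $A\hookrightarrow B\hookrightarrow C$: here one observes that the three associated triangles fit into an octahedral diagram in $\Dperf(R)$ and applies the octahedral axiom built into the definition of a determinant functor on a triangulated category. Compatibility with zero objects and with split exact sequences is routine.

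The main technical obstacle is checking carefully that the construction $\Delta\mapsto\tilde\Delta$ respects the chosen sign conventions (so that one really obtains genuine distinguished triangles rather than triangles differing by a sign) and that it intertwines the exact-category filtration axiom with the octahedral axiom on the triangulated side. Once this is done, functoriality of the assignment $g\mapsto g\circ I$ in morphisms of determinant functors is immediate from the definition. The induced homomorphisms on $K$-groups then follow by the same universal-property argument as in the proof of Lemma~\ref{lemma_equivalence}: the functor $g\mapsto g\circ I$ corresponds to a canonical monoidal functor between the Picard categories which are the targets of universal determinant functors on $(\Cperf(R),\qis)$ and $\Dperf(R)$, and the induced maps are obtained by applying $\pi_i$ for $i=0,1$.
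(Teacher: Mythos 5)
Your proposal is correct and follows essentially the same route as the paper: you define $(g\circ I)_1=g_1\circ I$ and $(g\circ I)_2(\Delta)=g_2(\widehat{\Delta})$ via the distinguished triangle obtained from $A\to B\to\cone(a)\to TA$ and the canonical quasi-isomorphism $\cone(a)\to C$, and you obtain the maps on $K$-groups from the induced monoidal functor between the targets of the universal determinant functors. The paper leaves the axiom verifications as ``not difficult to verify,'' whereas you usefully flag where the octahedral axiom and the sign conventions enter; this is a matter of level of detail, not of method.
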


\begin{proof}
We first recall that for every short exact sequence $\Delta: 0\to
A\xrightarrow{a} B\xrightarrow{b} C\to 0$ of complexes of
$R$-modules there exists an associated distinguished triangle
$\widehat{\Delta}: A\xrightarrow{a} B\xrightarrow{b}
C\xrightarrow{c} TA$ in $\boldD(R)$ where $c$ is the morphism
$C\xleftarrow{s}\mathrm{cone}(a)\to TA$ (here $s:
\mathrm{cone}(a)\to C$ is the canonical quasi-isomorphism and
$\mathrm{cone}(a)\to TA$ is the negative of the canonical
projection).

Let $g=(g_1,g_2): \Dperf(R)\to\calP$ be a determinant functor. The
functor $I: \Cperf(R)\to\Dperf(R)$ sends quasi-isomorphisms in
$\Cperf(R)$ to isomorphisms in $\Dperf(R)$. We can therefore define
$f_1:=g_1\circ I: \Cperf(R)_{\qis}\to\calP$. For a short exact
sequence $\Delta: 0\to A\xrightarrow{a} B\xrightarrow{b} C\to 0$ in
$\Cperf(R)$ we define $f_2(\Delta):=g_2(\widehat{\Delta}): f_1(B)\to
f_1(A)\otimes f_1(C)$. It is not difficult to verify that
$(f_1,f_2)$ is a determinant functor on $(\Cperf(R),\qis)$ which we
will denote by $g\circ I$. For a morphism $\lambda: g\to g'$ of
determinant functors $g, g': \Dperf(R)\to\calP$ it is clear how to
define the morphism $\lambda\circ I: g\circ I\to g'\circ I$. We
obtain a functor
\begin{equation*}
\det\big(\Dperf(R),\calP\big) \longrightarrow
\det\big((\Cperf(R),\qis),\calP\big).
\end{equation*}

Now if $f: (\Cperf(R),\qis) \to \calV$ and $g: \Dperf(R) \to \calW$
are universal determinant functors then there exists a monoidal
functor $M: \calV \to \calW$ such that $M\circ f$ and $g\circ I$ are
isomorphic. Furthermore $M$ is unique up to isomorphism. Therefore
$M$ induces well-defined group homomorphisms
\begin{equation*}
K_i\big(\Cperf(R),\qis\big) = \pi_i(\calV) \xrightarrow{\pi_i(M)}
\pi_i(\calW) = K_i\big(\Dperf(R)\big)
\end{equation*}
for $i=0,1$.
\end{proof}

We will now study the maps $K_i\big(\Cperf(R),\qis\big)\to
K_i\big(\Dperf(R)\big)$ in more detail, first for arbitrary rings
(Proposition \ref{proposition_perf_universal}) and then for regular
rings (Proposition \ref{proposition_perf_regular}).

\begin{proposition}
\label{proposition_perf_universal} Let $R$ be an arbitrary ring.
Then the homomorphism
\begin{equation*}
K_i\big(\Cperf(R),\qis\big)\longrightarrow K_i\big(\Dperf(R)\big)
\end{equation*}
is bijective for $i=0$ and surjective for $i=1$.
\end{proposition}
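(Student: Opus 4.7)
The proposition has two assertions, and I would handle them by different methods, because a single uniform argument would prove too much: producing a global inverse to the functor $\det(\Dperf(R),\calP)\to\det((\Cperf(R),\qis),\calP)$ would also give $K_1$-injectivity, which will fail for non-regular $R$ (see \S \ref{section_example}). So the plan is to prove $K_0$-bijectivity and $K_1$-surjectivity separately, in each case using the explicit description of the relevant $K$-group.

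For $K_0$-bijectivity I would compare generators and relations directly. Both groups are generated by isomorphism classes of perfect complexes and the induced map is the identity on generators, so it is surjective. For injectivity it suffices to show that every distinguished-triangle relation $[B]=[A]+[C]$ in $K_0(\Dperf(R))$ already holds in $K_0(\Cperf(R),\qis)$. By the choice of triangulation on $\Dperf(R)$ recalled at the start of \S \ref{section_perfect_triangulated}, any distinguished triangle is isomorphic to one of the form $A\xrightarrow{a}B\to\cone(a)\to A[1]$ coming from an honest morphism $a$ of complexes. The short exact sequence $0\to B\to\cone(a)\to A[1]\to 0$ in $\Cperf(R)$ yields $[\cone(a)]=[B]+[A[1]]$; applied to $a=\id_A$ (where $\cone(\id_A)$ is quasi-isomorphic to zero) it yields $[A[1]]=-[A]$; combining these gives the relation $[B]=[A]+[\cone(a)]$.

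For $K_1$-surjectivity I would use $K_i=\pi_i$ of the universal Picard category, together with the fact that $\pi_1$ of any Picard category is the automorphism group of any fixed object. Every class in $K_1(\Dperf(R))$ is then represented by an automorphism $\beta:A\to A$ of some perfect complex in $\Dperf(R)$, which by the definition of morphisms in $\Dperf(R)$ is given by a roof $A\xleftarrow{s}C\xrightarrow{a}A$ of quasi-isomorphisms in $\Cperf(R)$. In the universal Picard category $\calV$ over $(\Cperf(R),\qis)$ the composite $f_1(a)\circ f_1(s)^{-1}$ is an automorphism of $f_1(A)$ and hence defines a class in $K_1(\Cperf(R),\qis)=\pi_1(\calV)$; by construction its image under the induced monoidal functor $\calV\to\calW$ is $\beta$.

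The main obstacle is verifying that this roof-lift depends only on $\beta$ and not on the chosen roof. For this I would use the common-refinement property of quasi-isomorphism roofs, i.e., the lifting property of objects of $\Cb(R\proj)$ recalled in the remark preceding Lemma \ref{lemma_perfect_exact}, together with the homotopy-invariance lemma for determinant functors on $(\Cperf(R),\qis)$ stated at the start of \S \ref{section_perfect_exact}. A secondary point is the identification of $K_1(\Dperf(R))$ with automorphism classes: namely, that $\pi_1$ of the universal Picard category over $\Dperf(R)$ is generated by classes of automorphisms of perfect complexes. This follows from the universal property of the universal determinant functor but needs to be unpacked before the surjectivity argument above applies.
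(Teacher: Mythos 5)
Your $K_0$ argument is correct and takes a genuinely more elementary route than the paper's: you use the identification of $\pi_0$ of the universal Picard category with the naive Grothendieck-group presentation (free abelian group on objects modulo the distinguished-triangle, respectively conflation-plus-quasi-isomorphism, relations), reduce an arbitrary distinguished triangle to the cone triangle of an honest chain map, and verify the relation via the conflation $0\to B\to\cone(a)\to A[1]\to 0$ and the case $a=\id_A$. This works; you should just add a sentence on why $\pi_0$ of the universal target has that presentation (map to the discrete Picard category on the naive $K_0$ and back, using universality).

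The $K_1$ half has a genuine gap, and it sits exactly at what you call the ``secondary point''. You need that every class in $K_1\big(\Dperf(R)\big)=\pi_1(\calW)$ is represented by --- or at least that $\pi_1(\calW)$ is generated by --- elements of the form $g_1(\beta)$ for automorphisms $\beta$ in $\Dperf(R)$. This does \emph{not} follow from the universal property: the universal Picard category $\calW$ is generated, as a Picard category, by the morphisms $g_1(\alpha)$ together with the maps $g_2(\Delta)^{\pm1}$ attached to distinguished triangles and the structural constraints, so $\pi_1(\calW)$ a priori contains loops such as $g_2(\Delta')^{-1}\circ g_2(\Delta)$ for two triangles with the same vertices, and there is no formal reason these are products of classes $g_1(\beta)$. (A posteriori the generation statement is true, but only because it is essentially equivalent to the surjectivity you are trying to prove, given that $K_1(R)\to K_1\big(\Cperf(R),\qis\big)$ is an isomorphism and $K_1(R)$ is generated by automorphism classes --- so invoking it here is circular.) The paper avoids this by constructing the universal determinant functor on $\Dperf(R)$ explicitly: starting from the universal $f:(\Cperf(R),\qis)\to\calV$ it forms quotient Picard categories $\calV\to\calV/S'\to(\calV/S')/S''=\calW$ (Lemma \ref{lemma_quotient}), imposing exactly the identifications forced by compatibility with isomorphisms of triangles and by the octahedral/associativity axiom, proves that the resulting $g:\Dperf(R)\to\calW$ is universal, and then reads off $\pi_0$-bijectivity and $\pi_1$-surjectivity from the fact that the quotient functor is bijective on objects and surjective on morphisms. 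Some such explicit construction of the universal $\calW$ (or an independently proved presentation of $\pi_1(\calW)$) is needed to close your argument; your roof-lifting step is fine once the generation statement is in place.
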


\begin{proof}
Let $f=(f_1,f_2):(\Cperf(R),\qis)\to\calV$ be a universal
determinant functor. We will construct a universal determinant
functor for the triangulated category $\Dperf(R)$ by identifying
certain morphisms in the Picard category $\calV$.

We first note that the functor $f_1: \Cperf(R)_{\qis}\to\calV$
naturally induces a functor $\tilde{f}_1: \Dperf(R)_{\iso}\to\calV$.
Indeed, if $A$ is an object in $\Dperf(R)$ then we let
$\tilde{f}_1(A):=f_1(A)$, and if $a: A\to B$ is an isomorphism in
$\Dperf(R)$ then we let $\tilde{f}_1(a):=f_1(t)\circ f_1(s)^{-1}$
for any quasi-isomorphisms $s: C\to A, t: C\to B$ in $\Cperf(R)$
such that $a=t\circ s^{-1}$ in $\Dperf(R)$. Note that
$f_1=\tilde{f}_1\circ I$.

Now let $S' \subseteq
\bigcup_{(X,Y)}\Hom_{\calV}(X,Y)\times\Hom_{\calV}(X,Y)$ (where the
union is over all pairs $(X,Y)$ of objects of $\calV$) be the class
consisting of the following pairs of morphisms in $\calV$. If
$\Delta_i: 0\to A_i\to B_i\to C_i\to 0$ are short exact sequences in
$\Cperf(R)$ for $i=1,2$ and
\begin{equation*}
\xymatrix{ \widehat{\Delta_1}:{\,} A_1 \ar[r] \ar@<2.6ex>[d]^a & B_1
\ar[r] \ar[d]^b & C_1 \ar[r] \ar[d]^c & TA_1 \ar[d]^{Ta} \\
\widehat{\Delta_2}:{\,} A_2 \ar[r] & B_2 \ar[r] & C_2 \ar[r] & TA_2
}
\end{equation*}
is a commutative diagram in $\Dperf(R)$ with isomorphisms $a, b, c$
in $\Dperf(R)$, then the pair
\begin{equation*}
\big((\tilde{f}_1(a)\otimes \tilde{f}_1(c))\circ f_2(\Delta_1),\quad
f_2(\Delta_2)\circ \tilde{f}_1(b)\big)
\end{equation*}
belongs to $S'$.

Let $Q': \calV\to\calV/S'$ be the quotient Picard category (cf.\
Lemma \ref{lemma_quotient} below), and let $\overline{f}_1$ be the
composite functor $\Dperf(R)_{\iso} \xrightarrow{\tilde{f}_1} \calV
\xrightarrow{Q'} \calV/S'$. If $\Delta: A\to B\to C\to TA$ is a
distinguished triangle in $\Dperf(R)$ then there exists a short
exact sequence $\Delta_1: 0\to A_1\to B_1\to C_1\to 0$ in
$\Cperf(R)$ and isomorphisms $a,b,c$ in $\Dperf(R)$ such that
\begin{equation*}
\xymatrix{ \Delta:{\,} A \ar[r] \ar@<2.4ex>[d]^{a} & B \ar[r]
\ar[d]^{b} & C
\ar[r] \ar[d]^{c} & TA \ar[d]^{Ta} \\
\widehat{\Delta_1}:{\,} A_1 \ar[r] & B_1 \ar[r] & C_1 \ar[r] & TA_1
}
\end{equation*}
commutes in $\Dperf(R)$. We define $\overline{f}_2(\Delta):
\overline{f}_1(B)\to \overline{f}_1(A)\otimes \overline{f}_1(C)$ to
be $\overline{f}_2(\Delta):=(\overline{f}_1(a^{-1})\otimes
\overline{f}_1(c^{-1})) \circ f_2(\Delta_1) \circ
\overline{f}_1(b)$. Our definition of $S'$ guarantees that
$\overline{f}_2(\Delta)$ is well-defined.

Now $\overline{f}=(\overline{f}_1,\overline{f}_2):
\Dperf(R)\to\calV/S'$ does not necessarily satisfy the associativity
axiom. Let $S''$ be the set of pairs of morphisms in $\calV/S'$
which must be identified in order for the associativity axiom to
hold. More precisely, if
\begin{equation*}
\xymatrix{ A \ar[r] \ar@{=}[d] & B \ar[r] \ar[d] & C' \ar[r] \ar[d]
& TA \ar@{=}[d] \\
A \ar[r] & C \ar[r] \ar[d] & B' \ar[r] \ar[d] & TA \\
& A' \ar@{=}[r] \ar[d] & A' \ar[d] & \\
& TB \ar[r] & TC' & }
\end{equation*}
is an octahedral diagram in $\Dperf(R)$, then the pair
\begin{equation*}
\big(\varphi \circ (\id\otimes \overline{f}_2(\Delta_{\mathrm{v2}}))
\circ \overline{f}_2(\Delta_{\mathrm{h2}}),\quad
(\overline{f}_2(\Delta_{\mathrm{h1}})\otimes\id) \circ
\overline{f}_2(\Delta_{\mathrm{v1}})\big)
\end{equation*}
belongs to $S''$. Here $\Delta_{\mathrm{h1}}$ and
$\Delta_{\mathrm{h2}}$ (resp.\ $\Delta_{\mathrm{v1}}$ and
$\Delta_{\mathrm{v2}}$) denote the first and second horizontal
(resp.\ vertical) distinguished triangles in the octahedral diagram,
and $\varphi$ is the associativity constraint in the Picard category
$\calV/S'$.

Let $Q'': \calV/S\to (\calV/S')/S''=:\calW$ be the quotient Picard
category (cf.\ Lemma \ref{lemma_quotient}). Let $g_1:=Q''\circ
\overline{f}_1: \Dperf(R)_{\iso}\to\calW$ and for every
distinguished triangle $\Delta: A\to B\to C\to TA$ in $\Dperf(R)$
let $g_2(\Delta)$ be the map
$g_1(B)=Q''(\overline{f}_1(B))\xrightarrow{Q''(\overline{f}_2(\Delta))}
Q''(\overline{f}_1(A)\otimes \overline{f}_1(C))=g_1(A)\otimes
g_1(C)$. It is then easy to check that $g=(g_1,g_2):
\Dperf(R)\to\calW$ is a determinant functor. Furthermore $Q\circ
f=g\circ I$ where $Q:=Q''\circ Q': \calV\to\calW$.

We claim that $g: \Dperf(R)\to\calW$ is universal. For this we must
show that for every Picard category $\calP$ the functor
\begin{equation}
\label{equation_f_D_W}
\begin{split}
\Hom^\otimes(\calW,\calP) & \longrightarrow
\det\big(\Dperf(R),\calP\big),
\\
M & \longmapsto M\circ g,
\end{split}
\end{equation}
is an equivalence of categories.

Let $h:\Dperf(R)\to\calP$ be any determinant functor. Then there
exists a monoidal functor $N: \calV\to \calP$ such that the
determinant functors $N\circ f$ and $h\circ I$ are isomorphic. One
easily sees that $N$ factors as
$\calV\xrightarrow{Q}\calW\xrightarrow{\overline{N}}\calP$ for a
unique monoidal functor $\overline{N}: \calW\to\calP$. Thus the
determinant functors $\overline{N}\circ g\circ I$ and $h\circ I$ are
isomorphic in $\det\big((\Cperf(R),\qis),\calP\big)$, and this
implies that the determinant functors $\overline{N}\circ g$ and $h$
are isomorphic in $\det(\Dperf(R),\calP)$. Hence the functor
(\ref{equation_f_D_W}) is essentially surjective.

To show that (\ref{equation_f_D_W}) is fully faithful, we consider
the following commutative diagram.
\begin{equation*}
\xymatrix@C+1.5cm{ \Hom^\otimes(\calW,\calP) \ar[r]^{M\mapsto M\circ
g} \ar[d]^{M\mapsto M\circ Q} & \det\big(\Dperf(R),\calP\big)
\ar[d]^{h\mapsto h\circ I} \\
\Hom^\otimes(\calV,\calP) \ar[r]^{N\mapsto N\circ f} &
\det\big((\Cperf(R),\qis),\calP\big) }
\end{equation*}
The bottom horizontal functor is fully faithful since $f:
(\Cperf(R),\qis)\to\calV$ is a universal determinant functor.
Furthermore it is easy to see that the left vertical functor is
fully faithful and that the right vertical functor is faithful. It
follows that the top horizontal functor is fully faithful as
required. This finishes the proof of the claim that $g$ is
universal.

By Lemma \ref{lemma_quotient} the homomorphisms $\pi_i(Q')$ and
$\pi_i(Q'')$ are bijective for $i=0$ and surjective for $i=1$.
Therefore the same is true for $\pi_i(Q)=\pi_i(Q'')\circ\pi_i(Q')$.
It follows that the homomorphism
\begin{equation*}
K_i\big(\Cperf(R),\qis\big)=\pi_i(\calV)\xrightarrow{\pi_i(Q)}
\pi_i(\calW)=K_i\big(\Dperf(R)\big)
\end{equation*}
is bijective for $i=0$ and surjective for $i=1$.
\end{proof}

The following lemma describes the quotient Picard category which was
used in the proof of Proposition \ref{proposition_perf_universal}.

\begin{lemma}
\label{lemma_quotient} Let $\calV$ be a Picard category and assume
that we are given a class $S\subseteq \bigcup_{(X,Y)}
\big(\Hom_\calV(X,Y)\times\Hom_\calV(X,Y)\big)$ where the union is
over all pairs $(X,Y)$ of objects in $\calV$. Then there exists a
Picard category $\calV/S$ and a monoidal functor $Q:
\calV\to\calV/S$ such that the following two properties are
satisfied.
\begin{enumerate}
\item $Q(\alpha)=Q(\alpha')$ whenever $(\alpha,\alpha')\in S$
\item If $\calP$ is any Picard category and $M: \calV\to\calP$ a monoidal
functor such that $M(\alpha)=M(\alpha')$ in $\calP$ whenever
$(\alpha,\alpha')\in S$ then there exists a unique monoidal functor
$N:\calV/S \to\calP$ making the diagram
\begin{equation*}
\xymatrix{
\calV \ar[r]^{Q} \ar[d]_{M} & \calV/S \ar@{-->}[dl]^{N} \\
\calP }
\end{equation*}
commutative.
\end{enumerate}

If $\calV$ is small then the functor $Q: \calV\to\calV/S$ induces an
isomorphism $\pi_0(Q): \pi_0(\calV)\cong\pi_0(\calV/S)$ and a
surjection $\pi_1(Q): \pi_1(\calV)\to\pi_1(\calV/S)$. If $S$
contains a pair $(\alpha,\alpha')$ with $\alpha\neq\alpha'$ then the
surjection $\pi_1(Q): \pi_1(\calV)\to\pi_1(\calV/S)$ is not an
isomorphism.
\end{lemma}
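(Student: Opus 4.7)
The plan is to construct $\calV/S$ by keeping the objects of $\calV$ unchanged and quotienting the $\Hom$-sets by the smallest family of equivalence relations compatible with the Picard structure. Concretely, for each pair $(X,Y)$ I would let $\sim_{X,Y}$ be the smallest equivalence relation on $\Hom_\calV(X,Y)$ such that (a) $\alpha\sim_{X,Y}\alpha'$ whenever $(\alpha,\alpha')\in S$ lies in $\Hom_\calV(X,Y)\times\Hom_\calV(X,Y)$, and (b) the family $\{\sim_{X,Y}\}$ is a congruence for composition and tensor, i.e.\ $\alpha\sim\alpha'$ implies $\gamma\circ\alpha\sim\gamma\circ\alpha'$, $\alpha\circ\delta\sim\alpha'\circ\delta$, $\alpha\otimes\epsilon\sim\alpha'\otimes\epsilon$ and $\epsilon\otimes\alpha\sim\epsilon\otimes\alpha'$. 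I would then set $\Hom_{\calV/S}(X,Y):=\Hom_\calV(X,Y)/\sim_{X,Y}$ and define $Q$ to be the identity on objects and the quotient map on morphisms. Composition and tensor are well-defined on classes by condition (b); the associator, unit, and braiding isomorphisms of $\calV$ descend to isomorphisms in $\calV/S$, the coherence axioms continue to hold because they are equalities of morphisms in $\calV$, and each object remains invertible with the quasi-inverse it had in $\calV$. Thus $\calV/S$ is a Picard category, $Q$ is a monoidal functor, and property (1) holds by construction.

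For property (2), given a monoidal functor $M:\calV\to\calP$ with $M(\alpha)=M(\alpha')$ whenever $(\alpha,\alpha')\in S$, I would observe that the relation ``$\alpha\sim_M\alpha'$ iff $M(\alpha)=M(\alpha')$'' is an equivalence relation satisfying (a) and (b), and hence contains $\sim_{X,Y}$. Therefore $N(X):=M(X)$ on objects and $N([\alpha]):=M(\alpha)$ on morphism classes is well-defined; equipping $N$ with the monoidal structure morphisms of $M$ gives a monoidal functor with $N\circ Q=M$, and uniqueness is forced because $Q$ is the identity on objects and surjective on morphisms.

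For the final statements, assume $\calV$ is small. Since $Q$ is the identity on objects and every morphism of $\calV/S$ is represented by a morphism of $\calV$, two objects are isomorphic in $\calV/S$ iff they are isomorphic in $\calV$, so $\pi_0(Q)$ is a bijection; surjectivity of $\pi_1(Q):\Aut_\calV(\mathbf{1})\to\Aut_{\calV/S}(\mathbf{1})$ is immediate from the surjectivity of $Q$ on $\Hom$-sets. Now suppose $(\alpha,\alpha')\in S$ with $\alpha\neq\alpha':X\to Y$, and set $\beta:=\alpha^{-1}\circ\alpha'\in\Aut_\calV(X)$; then $\beta\neq\id_X$, but property (1) together with condition (b) gives $Q(\beta)=\id_X$ in $\calV/S$. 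Using invertibility, pick a quasi-inverse $X^{-1}$ and an isomorphism $X\otimes X^{-1}\cong\mathbf{1}$ in $\calV$; conjugating $\beta\otimes\id_{X^{-1}}$ by this isomorphism produces an element $\widetilde{\beta}\in\Aut_\calV(\mathbf{1})=\pi_1(\calV)$, and naturality of this isomorphism with respect to the monoidal functor $Q$ shows that $\pi_1(Q)(\widetilde{\beta})$ is the corresponding element built from $Q(\beta)=\id_X$, hence is the identity in $\pi_1(\calV/S)$. Since the translation $\Aut_\calV(X)\cong\Aut_\calV(\mathbf{1})$ is an isomorphism, $\widetilde{\beta}\neq\id$, so $\pi_1(Q)$ is not injective.

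The one step that needs a little care, and which I expect to be the main source of tedious bookkeeping, is verifying that the congruence generated by $S$ is genuinely compatible with \emph{all} the coherence data (identities, associator, braiding, unit constraints, and inversion of morphisms), so that $\calV/S$ inherits a Picard structure for which $Q$ is strictly monoidal and the universal property (2) determines $N$ uniquely. Everything else is a direct translation of the analogous universal property for quotients of groups or monoids.
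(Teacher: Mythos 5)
Your proposal is correct and follows essentially the same route as the paper: form the smallest family of equivalence relations on the $\Hom$-sets containing $S$ and compatible with composition and tensor, take the quotient category with the same objects, check the universal property via the congruence induced by $M$, and detect non-injectivity of $\pi_1(Q)$ using the nontrivial automorphism $\alpha^{-1}\circ\alpha'$ that becomes trivial in $\calV/S$. The only cosmetic difference is that the paper identifies $\Aut_\calV(X)$ with $\pi_1(\calV)$ directly rather than translating to the unit object, which is harmless.
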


\begin{proof}
Suppose that for every pair $(X,Y)$ of objects in $\calV$ we have an
equivalence relation $\sim_{(X,Y)}$ on $\Hom_\calV(X,Y)$. Then we
say that these equivalence relations form a compatible system if
they satisfy the following three conditions:
\begin{enumerate}
\item If $\alpha,\alpha':X\to Y$ are morphisms such that
$(\alpha,\alpha')\in S$ then $\alpha\sim_{(X,Y)}\alpha'$.
\item If $\alpha\sim_{(X,Y)}\alpha'$ and $\beta\sim_{(Y,Z)}\beta'$
then $\beta\circ\alpha\sim_{(X,Z)}\beta'\circ\alpha'$.
\item If $\alpha\sim_{(X,Y)}\alpha'$ and $\beta\sim_{(Z,W)}\beta'$ then
$\alpha\otimes\beta\sim_{(X\otimes Z,Y\otimes W)}\alpha'\otimes\beta'$.
\end{enumerate}
Let $\{\sim_{(X,Y)} : (X,Y)\in\objects(\calV)^2\}$ be the unique
minimal compatible system of equivalence relations. We define
$\calV/S$ to be the category with objects
$\objects(\calV/S):=\objects(\calV)$ and morphisms
$\Hom_{\calV/S}(X,Y):=\Hom_{\calV}(X,Y)/\sim_{(X,Y)}$. Then
$\calV/S$ is a Picard category in a natural way and there exists a
canonical monoidal functor $Q:\calV\to\calV/S$. Furthermore it is
easy to verify the universal property for $\calV/S$.

By construction, the functor $Q:\calV\to\calV/S$ is bijective on
objects and surjective on morphisms. Hence the induced homomorphisms
$\pi_i(Q): \pi_i(\calV)\to\pi_i(\calV/S)$ are certainly surjective.
The map $\pi_0(Q)$ is injective because any isomorphism in $\calV/S$
lifts to an isomorphism in $\calV$. Finally, if $\alpha, \alpha':
X\to Y$ are two distinct morphisms in $\calV$ with
$(\alpha,\alpha')\in S$, then the element
$\alpha^{-1}\circ\alpha'\in\Aut_\calV(X)=\pi_1(\calV)$ is
non-trivial but becomes trivial in $\pi_1(\calV/S)$.
\end{proof}

Recall that a ring $R$ is called regular if $R$ is noetherian and
every $R$-module has a finite projective resolution.

\begin{proposition}
\label{proposition_perf_regular} If the ring $R$ is regular, then
for every Picard category $\calP$ the functor
$\det\big(\Dperf(R),\calP\big) \to
\det\big((\Cperf(R),\qis),\calP\big)$ induced by
$I:\Cperf(R)\to\Dperf(R)$ is an equivalence of categories. Hence in
this case the homomorphism
\begin{equation*}
K_i\big(\Cperf(R),\qis\big) \longrightarrow K_i\big(\Dperf(R)\big)
\end{equation*}
is an isomorphism for $i=0$ and $i=1$.
\end{proposition}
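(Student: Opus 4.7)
The plan is to reduce the statement about Picard-category-valued determinant functors to the corresponding statement about $K$-groups, and then exploit regularity via classical $K$-theoretic results. By the triangulated-to-exact analog of Lemma \ref{lemma_equivalence} alluded to in the remark after its proof, applied to $I: \Cperf(R) \to \Dperf(R)$, the first assertion is equivalent to the map $K_i(\Cperf(R),\qis) \to K_i(\Dperf(R))$ being an isomorphism for $i=0$ and $i=1$. Proposition \ref{proposition_perf_universal} already gives bijectivity for $i=0$ and surjectivity for $i=1$, so the only new input needed is injectivity at $i=1$.

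The idea is to realize the composite $K_i(R) \to K_i(\Cperf(R),\qis) \to K_i(\Dperf(R))$ as a chain of classical isomorphisms. Since $R$ is regular, every finitely generated $R$-module admits a finite resolution by finitely generated projectives. This has two consequences I would use. First, the triangulated subcategories $\Db(R\modfg)$ and $\Dperf(R)$ of $\boldD(R)$ coincide. Second, Quillen's resolution theorem gives an isomorphism $K_i(R) = K_i(R\proj) \cong K_i(R\modfg)$ induced by the inclusion of exact categories. Finally, the result $K_i(\calA) \cong K_i(\Db(\calA))$ for abelian categories $\calA$ proved in \cite[\S 5]{Breuning06} applies to $\calA = R\modfg$, producing $K_i(R\modfg) \cong K_i(\Db(R\modfg)) = K_i(\Dperf(R))$.

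Concatenating with Corollary \ref{corollary_perfect_exact}, I would assemble the commutative square
\begin{equation*}
\xymatrix{
K_i(R) \ar[r]^-{\cong} \ar[d]_-{\cong} & K_i(\Cperf(R),\qis) \ar[d] \\
K_i(R\modfg) \ar[r]^-{\cong} & K_i(\Db(R\modfg)) = K_i(\Dperf(R))
}
\end{equation*}
in which three of the four arrows are already known to be isomorphisms; a two-out-of-three argument then forces the right vertical arrow to be an isomorphism, proving injectivity at $i=1$ and completing the proof.

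The principal obstacle is verifying commutativity of this square, namely that the composite $R\proj \hookrightarrow R\modfg \hookrightarrow \Db(R\modfg) \simeq \Dperf(R)$ agrees with the composite $R\proj \hookrightarrow \Cperf(R) \to \Dperf(R)$. Both should equal $M \mapsto M[0]$ up to canonical natural isomorphism, but making this precise at the level of the induced maps on $K$-groups---especially because the identification $\Db(R\modfg) = \Dperf(R)$ is realized only up to quasi-isomorphism of resolutions---requires some care. This is bookkeeping rather than mathematics, but it is the step where something could slip.
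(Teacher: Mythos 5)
Your proposal is correct and is essentially the paper's own argument: the paper likewise passes through $R\modfg$ and $\Db(R\modfg)$, using regularity to identify $\Db(R\modfg)$ with $\Dperf(R)$, Quillen's resolution theorem for $K_i(R\proj)\cong K_i(R\modfg)$, the theorem of the heart from \cite{Breuning06} for the step into the derived category, and Corollary \ref{corollary_perfect_exact}, concluding by the same two-out-of-three comparison. The only (harmless) difference is bookkeeping: the paper carries out the factorization at the level of the categories $\det(-,\calP)$ and then deduces the $K$-group statement via the triangulated analogue of Lemma \ref{lemma_equivalence}, whereas you invoke that analogue at the outset and argue with $K$-groups throughout; the commutativity you flag holds on the nose, since the canonical functor $\Db(R\modfg)\to\Dperf(R)$ sends $M[0]$ to $M[0]$ without any choice of resolution.
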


\begin{proof}
Let $F: \det\big(\Dperf(R),\calP\big)\to\det(R\proj,\calP)$ be the
functor which is induced by the embedding $R\proj\to\Dperf(R)$. We
claim that $F$ is an equivalence of categories.

Let $R\modfg$ denote the abelian category of finitely generated
$R$-modules and $\Db(R\modfg)$ the bounded derived category of
$R\modfg$. Since $R$ is regular every bounded complex of finitely
generated $R$-modules is perfect. On the other hand, every perfect
complex of $R$-modules is isomorphic in $\Dperf(R)$ to a bounded
complex of finitely generated projective modules and so in
particular to a bounded complex of finitely generated modules. It
easily follows that there exists a canonical equivalence of
triangulated categories $\Db(R\modfg)\to\Dperf(R)$.

Now the functor $F: \det\big(\Dperf(R),\calP\big) \to
\det(R\proj,\calP)$ can be factored as
\begin{equation*}
\begin{split}
\det\big(\Dperf(R),\calP\big) & \stackrel{(1)}{\longrightarrow}
\det\big(\Db(R\modfg),\calP\big) \\
& \stackrel{(2)}{\longrightarrow} \det(R\modfg,\calP) \\
& \stackrel{(3)}{\longrightarrow} \det(R\proj,\calP).
\end{split}
\end{equation*}
Here the functor (1) is induced by the equivalence of triangulated
categories $\Db(R\modfg)\to\Dperf(R)$ and is therefore itself an
equivalence. The functor (2) is induced by the canonical functor
$R\modfg\to\Db(R\modfg)$ and is an equivalence by the theorem of the
heart \cite[Theorem 5.2]{Breuning06}. Finally the functor (3) is
induced by the inclusion $R\proj\to R\modfg$. By Quillen's
resolution theorem \cite[Corollary 2 in \S 4]{Quillen73} this
inclusion induces an isomorphism $K_i(R\proj)\cong K_i(R\modfg)$ for
all $i$, hence by Lemma \ref{lemma_equivalence} the functor (3) is
an equivalence. It follows that $F$ is an equivalence as claimed.

Now note that the equivalence $F$ can also be factored as
\begin{equation*}
\det\big(\Dperf(R),\calP\big)\longrightarrow
\det\big((\Cperf(R),\qis),\calP\big)\longrightarrow
\det(R\proj,\calP).
\end{equation*}
Since the functor
$\det\big((\Cperf(R),\qis),\calP\big)\to\det(R\proj,\calP)$ is an
equivalence by Corollary \ref{corollary_perfect_exact}, it follows
that $\det\big(\Dperf(R),\calP\big) \to
\det\big((\Cperf(R),\qis),\calP\big)$ is an equivalence. This proves
the first statement of the proposition.

We have shown that for every Picard category $\calP$ the functor
$\det\big(\Dperf(R),\calP\big) \to
\det\big((\Cperf(R),\qis),\calP\big)$ is an equivalence of
categories. By an argument similar to Lemma \ref{lemma_equivalence}
this implies that $K_i\big(\Cperf(R),\qis\big) \to
K_i\big(\Dperf(R)\big)$ is an isomorphism for $i=0$ and $i=1$.
\end{proof}

We can now prove Theorem \ref{theorem_intro} from the introduction.

\begin{proof}[Proof of Theorem \ref{theorem_intro}]
The composite of the canonical homomorphisms
\begin{equation*}
K_i(R)\longrightarrow K_i\big(\Cperf(R),\qis\big)\longrightarrow
K_i\big(\Dperf(R)\big)
\end{equation*}
from Corollary \ref{corollary_perfect_exact} and Lemma
\ref{lemma_C_D_perf_functor} gives a canonical homomorphism
$K_i(R)\to K_i\big(\Dperf(R)\big)$. The statements about bijectivity
and surjectivity follow from Corollary
\ref{corollary_perfect_exact}, Proposition
\ref{proposition_perf_universal} and Proposition
\ref{proposition_perf_regular}.
\end{proof}

\section{An example}
\label{section_example}

We have seen that for a regular ring $R$ the canonical map
$K_1(R)\to K_1\big(\Dperf(R)\big)$ is an isomorphism. In this
section we will give an example of a non-regular ring $R$ for which
the groups $K_1(R)$ and $K_1\big(\Dperf(R)\big)$ are not isomorphic.
The same example also shows that in general the groups $K_1(R\proj)$
and $K_1\big(\Db(R\proj)\big)$ are not isomorphic, so the
isomorphism $K_1(\calA)\cong K_1\big(\Db(\calA)\big)$ for an abelian
category $\calA$ (compare \cite[\S 5.1]{Breuning06}) does not
generalize to exact categories. The example in this section is
motivated by \cite[\S 2]{Vaknin01b}.

For any ring $R$ we let $\Kb(R\proj)$ be the bounded homotopy
category of $R\proj$, so the objects of $\Kb(R\proj)$ are bounded
complexes of finitely generated projective $R$-modules and the
morphisms are homotopy classes of morphisms of complexes. It is well
known that $\Kb(R\proj)$ has the structure of a triangulated
category.

There exists a canonical functor $\Cb(R\proj)\to\Kb(R\proj)$. A
quasi-isomorphism $a: A\to B$ in $\Cb(R\proj)$ is mapped to an
isomorphism in $\Kb(R\proj)$. Hence, as in the proof of Lemma
\ref{lemma_C_D_perf_functor}, if $\Delta: 0\to A\xrightarrow{a}
B\xrightarrow{b} C\to 0$ is a short exact sequence in $\Cb(R\proj)$,
there exists a canonical morphism $c: C\to TA$ in $\Kb(R\proj)$ such
that $\widehat{\Delta}: A\xrightarrow{a} B\xrightarrow{b}
C\xrightarrow{c} TA$ is a distinguished triangle in $\Kb(R\proj)$.

\begin{lemma}
\label{lemma_example} For any ring $R$ the canonical functor
$\Cb(R\proj)\to\Kb(R\proj)$ induces a functor
\begin{equation}
\label{equation_example_1} \det\big(\Kb(R\proj),\calP\big)
\longrightarrow \det\big((\Cb(R\proj),\qis),\calP\big)
\end{equation}
and therefore a homomorphism
\begin{equation}
\label{equation_example_2} K_1\big(\Cb(R\proj),\qis\big)
\longrightarrow K_1\big(\Kb(R\proj)\big).
\end{equation}
The homomorphism (\ref{equation_example_2}) is always surjective. If
$R=k[\varepsilon]/(\varepsilon^2)$ for a field $k$ then the
homomorphism (\ref{equation_example_2}) is not injective.
\end{lemma}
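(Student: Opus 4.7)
The plan is to mirror Lemma \ref{lemma_C_D_perf_functor} and Proposition \ref{proposition_perf_universal} for the first two assertions, and to produce an explicit element witnessing non-injectivity for the third.

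For the construction of the functor (\ref{equation_example_1}), I would repeat the argument of Lemma \ref{lemma_C_D_perf_functor} verbatim with $\Kb(R\proj)$ in place of $\Dperf(R)$. Given $g=(g_1,g_2)\colon \Kb(R\proj) \to \calP$, define $f_1 := g_1\circ I$; this is well-defined on quasi-isomorphisms in $\Cb(R\proj)$ because every quasi-isomorphism between bounded complexes of projectives is a chain homotopy equivalence, hence an isomorphism in $\Kb(R\proj)$. For a short exact sequence $\Delta$ in $\Cb(R\proj)$, set $f_2(\Delta):=g_2(\widehat{\Delta})$. The determinant functor axioms for $(f_1,f_2)$ follow as in Lemma \ref{lemma_C_D_perf_functor}, and the universal property yields the induced map (\ref{equation_example_2}).

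For the surjectivity of (\ref{equation_example_2}), the plan is to construct a universal determinant functor for $\Kb(R\proj)$ as a quotient of a universal determinant functor $f\colon(\Cb(R\proj),\qis)\to\calV$, following the pattern of Proposition \ref{proposition_perf_universal}. The two inputs required are: first, every distinguished triangle in $\Kb(R\proj)$ is isomorphic in $\Kb(R\proj)$ to one of the form $\widehat{\Delta}$ for a short exact sequence $\Delta$ in $\Cb(R\proj)$ (for any chain map $a\colon A\to B$ in $\Cb(R\proj)$ the mapping cylinder yields a short exact sequence $0\to A\to\mathrm{cyl}(a)\to\cone(a)\to 0$ whose associated triangle is canonically isomorphic in $\Kb(R\proj)$ to $A\xrightarrow{a}B\to\cone(a)\to TA$); second, the same device handles octahedra. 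Adapting the relation sets $S'$ and $S''$ of Proposition \ref{proposition_perf_universal} and applying Lemma \ref{lemma_quotient} then yields the required surjection on $K_1$.

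The main difficulty is the non-injectivity for $R=k[\varepsilon]/(\varepsilon^2)$. Here the plan is to exhibit an element of $K_1(R)\cong K_1\big(\Cb(R\proj),\qis\big)$ that is nontrivial but becomes zero in $K_1\big(\Kb(R\proj)\big)$. The natural candidate is the class $[R,1+\varepsilon]$, whose nontriviality in $K_1(R)\cong R^\times$ is clear. To show that it dies in $K_1\big(\Kb(R\proj)\big)$, I would exploit the distinguished triangle $R\xrightarrow{\varepsilon}R\to\cone(\varepsilon)\to TR$ in $\Kb(R\proj)$, which is precisely a triangle that does not arise from a short exact sequence in $\Cb(R\proj)$. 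Since $\varepsilon(1+a\varepsilon)=\varepsilon=(1+a\varepsilon)\varepsilon$ for every $a\in k$, this triangle admits endomorphisms carrying $1+a\varepsilon$ on one copy of $R$ and $1$ on the other; via the presentation of $K_1$ of a triangulated category by automorphisms of triangles that emerges from the universal determinant functor, each such endomorphism yields a relation in $K_1\big(\Kb(R\proj)\big)$ between $[R,1+a\varepsilon]$ and an auxiliary class supported on $\cone(\varepsilon)$. Combining several such relations, together with an octahedral relation coming from the factorization $\varepsilon\cdot\varepsilon=0$ (which produces an octahedron involving two copies of $\cone(\varepsilon)$ and the split object $R\oplus TR$), should collapse the $\cone(\varepsilon)$-contributions and force $[R,1+\varepsilon]=0$. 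Identifying the precise combination of triangular and octahedral moves is the main obstacle, and the construction in \cite[\S 2]{Vaknin01b} provides the template.
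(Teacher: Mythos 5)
Your treatment of the first two assertions matches the paper: the functor (\ref{equation_example_1}) and the map (\ref{equation_example_2}) are obtained exactly as in Lemma \ref{lemma_C_D_perf_functor}, and surjectivity follows by running the quotient construction of Proposition \ref{proposition_perf_universal} (via Lemma \ref{lemma_quotient}) with $\Kb(R\proj)$ in place of $\Dperf(R)$; your remark that mapping cylinders realize every distinguished triangle of $\Kb(R\proj)$ as one of the form $\widehat{\Delta}$ is the right supporting observation. Likewise, deducing $\alpha\neq\alpha'$ from $K_1(R)\cong R^\times$ and $K_1(R)\cong K_1(R\proj)\cong K_1\big(\Cb(R\proj),\qis\big)$ is exactly what the paper does.

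The gap is in the non-injectivity step, which you yourself flag as unresolved (``identifying the precise combination of triangular and octahedral moves is the main obstacle''). No combination of octahedral relations is needed, and the route through ``an auxiliary class supported on $\cone(\varepsilon)$'' that must later be ``collapsed'' is a detour: the missing idea is that the pair $(\id,\,1+\varepsilon)$ extends to an automorphism of the distinguished triangle $R[0]\xrightarrow{\varepsilon}R[0]\to C\to R[1]$, $C=\cone(\varepsilon)$, in which the induced map on $C$ can be taken to be the \emph{identity}. Indeed, $(1+\varepsilon)\varepsilon=\varepsilon$ gives commutativity of the first square, and the second square $\iota\circ(1+\varepsilon)=\iota$ holds in $\Kb(R\proj)$ because $\iota\circ\varepsilon\colon R[0]\to C$ is null-homotopic via the degree $-1$ map $\id\colon R\to C^{-1}$ (here one uses that $R[0]$ has zero differential, so the homotopy contributes nothing in degree $-1$). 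A single application of the naturality axiom for the determinant functor $g$ on $\Kb(R\proj)$ to this automorphism $(\id,1+\varepsilon,\id)$ yields $g_2(\Delta)\circ g_1(1+\varepsilon)=(g_1(\id)\otimes g_1(\id))\circ g_2(\Delta)$, and since $g_2(\Delta)$ is invertible in the Picard category this forces $g_1(1+\varepsilon)=g_1(\id)$, i.e.\ $Q(\alpha)=Q(\alpha')$. Combined with $\alpha\neq\alpha'$ this gives the non-injectivity. This explicit diagram is precisely the content of the example in \cite[\S 2]{Vaknin01b} that you cite as a template but do not extract; without it your argument does not close.
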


\begin{proof}
The proof of the existence of the functor (\ref{equation_example_1})
and homomorphism (\ref{equation_example_2}) is essentially the same
as the proof of Lemma \ref{lemma_C_D_perf_functor}, and the proof of
the surjectivity of (\ref{equation_example_2}) is similar to the
proof of Proposition \ref{proposition_perf_universal}. More
precisely, if $f=(f_1,f_2): (\Cb(R\proj),\qis)\to\calV$ is a
universal determinant functor then we can construct a universal
determinant functor $g=(g_1,g_2): \Kb(R\proj)\to\calW$ where $\calW$
is obtained from $\calV$ by identifying certain homomorphisms. We
denote the corresponding monoidal functor $\calV\to\calW$ by $Q$. It
follows from Lemma \ref{lemma_quotient} that
(\ref{equation_example_2}) is surjective.

From now on let $R=k[\varepsilon]/(\varepsilon^2)$ for some field
$k$. To show that the homomorphism (\ref{equation_example_2}) is not
injective, it suffices to show that there exist two morphisms
$\alpha, \alpha': X\to Y$ in $\calV$ such that $\alpha\neq\alpha'$
but $Q(\alpha)=Q(\alpha')$. We claim that the two morphisms
$\alpha=\id: f_1(R[0])\to f_1(R[0])$ and
$\alpha'=f_1(1+\varepsilon): f_1(R[0])\to f_1(R[0])$ (where the
homomorphism $1+\varepsilon: R[0]\to R[0]$ is given by
multiplication with $1+\varepsilon$) have these properties.

We first show that $\alpha\neq\alpha'$. Recall that $K_1(R)$ can be
described in terms of generators and relations, where the generators
are pairs $(P,a)$ with $P\in\objects(R\proj)$ and $a:P\to P$ an
automorphism. Since $R$ is a commutative local ring, the usual
determinant gives an isomorphism $K_1(R)\cong R^\times$. It follows
that $(R,\id)$ and $(R,1+\varepsilon)$ are distinct in $K_1(R)$.
Since $K_1(R)\cong K_1(R\proj)$ we can deduce that $h_1(\id)$ and
$h_1(1+\varepsilon)$ are distinct in $K_1(R\proj)$ where $(h_1,h_2)$
is a universal determinant functor on $R\proj$. Finally this implies
that $\alpha=f_1(\id)$ and $\alpha'=f_1(1+\varepsilon)$ are distinct
in $K_1\big(\Cb(R\proj),\qis\big)$ because the canonical map
$K_1(R\proj)\to K_1\big(\Cb(R\proj),\qis\big)$ is an isomorphism.

Next we show that $Q(\alpha)=Q(\alpha')$. For this it suffices to
show that $g_1(\id)=g_1(1+\varepsilon)$. However this follows
immediately from the commutative diagram in $\Kb(R\proj)$
\begin{equation*}
\xymatrix{ R[0] \ar[r]^{\varepsilon} \ar@{=}[d] & R[0] \ar[r]
\ar[d]^{1+\varepsilon} & C \ar[r] \ar@{=}[d] & R[1] \ar@{=}[d] \\
R[0] \ar[r]^{\varepsilon} & R[0] \ar[r] & C \ar[r] & R[1] }
\end{equation*}
where $C=\cone\big(R[0]\xrightarrow{\varepsilon} R[0]\big)$, compare
\cite[\S 2]{Vaknin01b}.
\end{proof}

\begin{corollary}
\label{corollary_example_0} If $k$ is a finite field and
$R=k[\varepsilon]/(\varepsilon^2)$ then the groups $K_1(R)$ and
$K_1\big(\Kb(R\proj)\big)$ are not isomorphic.
\end{corollary}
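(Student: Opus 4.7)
The plan is to assemble a non-injective surjection $K_1(R) \twoheadrightarrow K_1(\Kb(R\proj))$ and then use a finiteness argument to rule out isomorphism. The hard work has already been done in Lemma \ref{lemma_example}; the corollary is a short consequence.

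First I would chain together the two relevant maps. Corollary \ref{corollary_perfect_exact} (or rather the part of its proof coming from the equivalence (\ref{equation_proj_exact})) gives a canonical isomorphism $K_1(R) \xrightarrow{\cong} K_1\bigl(\Cb(R\proj),\qis\bigr)$, and Lemma \ref{lemma_example} provides a surjective homomorphism $K_1\bigl(\Cb(R\proj),\qis\bigr) \to K_1\bigl(\Kb(R\proj)\bigr)$ which, in the case $R = k[\varepsilon]/(\varepsilon^2)$, is not injective. Composing, we obtain a surjection $\varphi \colon K_1(R) \twoheadrightarrow K_1\bigl(\Kb(R\proj)\bigr)$ with non-trivial kernel.

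Next I would use the hypothesis that $k$ is finite. Since $R$ is a commutative local ring with residue field $k$, the usual determinant identifies $K_1(R)$ with $R^\times$, which consists of the units $a + b\varepsilon$ with $a \in k^\times$; hence $K_1(R)$ is a finite group of order $|k|(|k|-1)$. Thus the codomain $K_1\bigl(\Kb(R\proj)\bigr)$ of $\varphi$ is also finite.

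Finally, I would observe that a surjective homomorphism between finite groups is injective if and only if the groups have the same order. Since $\varphi$ has non-trivial kernel, $|K_1(\Kb(R\proj))| < |K_1(R)|$, so the two groups cannot be isomorphic. I do not expect any serious obstacle here: all the substance is contained in the non-injectivity statement of Lemma \ref{lemma_example}, and the role of finiteness of $k$ is simply to guarantee that $K_1(R)$ is finite so that a proper quotient cannot be abstractly isomorphic to it.
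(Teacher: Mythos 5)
Your proposal is correct and follows essentially the same route as the paper: combine the surjective-but-not-injective map from Lemma \ref{lemma_example} with the identification $K_1(R)\cong K_1\bigl(\Cb(R\proj),\qis\bigr)$, then use finiteness of $K_1(R)\cong R^\times$ to rule out any abstract isomorphism with a proper quotient. Nothing to add.
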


\begin{proof}
Lemma \ref{lemma_example} shows that in this case the canonical map
$K_1(R)\to K_1\big(\Kb(R\proj)\big)$ is surjective but not
injective. Since $K_1(R)\cong R^\times$ is finite, it follows that
$K_1(R)$ and $K_1\big(\Kb(R\proj)\big)$ are not isomorphic.
\end{proof}

\begin{corollary}
\label{corollary_example_1} If $k$ is a finite field and
$R=k[\varepsilon]/(\varepsilon^2)$ then the groups
$K_1(R)=K_1(R\proj)$ and $K_1\big(\Db(R\proj)\big)$ are not
isomorphic.
\end{corollary}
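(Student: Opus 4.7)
The plan is to deduce the corollary from Corollary \ref{corollary_example_0} by identifying the triangulated categories $\Db(R\proj)$ and $\Kb(R\proj)$. Recall that $\Db(R\proj)$, interpreted as the bounded derived category of the exact category $R\proj$, is the Verdier quotient of $\Kb(R\proj)$ by the thick subcategory of acyclic complexes, equivalently the localization of $\Kb(R\proj)$ at quasi-isomorphisms.

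The key observation is that for any ring $R$, every bounded acyclic complex $P^\bullet$ of finitely generated projective $R$-modules is split acyclic, hence contractible. I would establish this by descending induction on the kernels $Z^i=\ker(d^i)$: for $n$ sufficiently large one has $Z^n=0$, and each short exact sequence $0\to Z^i\to P^i\to Z^{i+1}\to 0$ splits once $Z^{i+1}$ is known to be projective, so all $Z^i$ are projective and $P^\bullet$ is null-homotopic. Consequently every acyclic object of $\Kb(R\proj)$ is already isomorphic to zero in $\Kb(R\proj)$, quasi-isomorphisms in $\Kb(R\proj)$ are already isomorphisms, and one obtains a canonical equivalence of triangulated categories $\Db(R\proj)\simeq\Kb(R\proj)$, hence an isomorphism $K_1\big(\Db(R\proj)\big)\cong K_1\big(\Kb(R\proj)\big)$.

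Combining this with Corollary \ref{corollary_example_0} and the standard identification $K_1(R)=K_1(R\proj)$ (Quillen's definition of $K_1$, as already used in Lemma \ref{lemma_example} and Corollary \ref{corollary_example_0}) immediately yields that $K_1(R)$ and $K_1\big(\Db(R\proj)\big)$ are not isomorphic. I do not anticipate any serious obstacle: the hard work was carried out in Lemma \ref{lemma_example} and Corollary \ref{corollary_example_0}, and this final corollary amounts to a repackaging via the equivalence $\Db(R\proj)\simeq\Kb(R\proj)$.
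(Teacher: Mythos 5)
Your proposal is correct and follows essentially the same route as the paper: both identify $\Db(R\proj)$ with $\Kb(R\proj)$ by observing that bounded acyclic complexes of finitely generated projectives are contractible (your descending induction on the kernels is exactly the verification behind the paper's ``it is easy to see'' step), and then invoke Corollary \ref{corollary_example_0}.
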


\begin{proof}
Recall that the derived category $\Db(R\proj)$ is obtained from
$\Kb(R\proj)$ by inverting all morphisms whose cone is acyclic (in
the sense of \cite[p.\ 389]{Neeman90}). But it is easy to see that
the cone of a morphism $a: A\to B$ in $\Kb(R\proj)$ is acyclic if
and only if $a$ is an isomorphism. Hence $\Db(R\proj)=\Kb(R\proj)$
and therefore $K_1\big(\Db(R\proj)\big)=K_1\big(\Kb(R\proj)\big)$.
Thus Corollary \ref{corollary_example_1} follows from Corollary
\ref{corollary_example_0}.
\end{proof}

\begin{corollary}
\label{corollary_example_2} If $k$ is a finite field and
$R=k[\varepsilon]/(\varepsilon^2)$ then the groups $K_1(R)$ and
$K_1\big(\Dperf(R)\big)$ are not isomorphic.
\end{corollary}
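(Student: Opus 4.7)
The plan is to reduce immediately to Corollary \ref{corollary_example_1} by identifying $\Dperf(R)$ with $\Db(R\proj)$ as triangulated categories for any ring $R$. For any $R$ the canonical functor $\Kb(R\proj)\to\boldD(R)$ sends quasi-isomorphisms in $\Cb(R\proj)$ to isomorphisms and therefore factors through $\Db(R\proj)$, landing tautologically in $\Dperf(R)$. I would then verify that the resulting triangulated functor $\Db(R\proj)\to\Dperf(R)$ is an equivalence: essential surjectivity is immediate from the definition of a perfect complex, and for full faithfulness the key input is the standard calculation that for $P,Q\in\Kb(R\proj)$ the derived-category Hom $\Hom_{\boldD(R)}(P,Q)$ is already computed by homotopy classes of chain maps. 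This in turn rests on the lifting property recorded in the remark preceding Lemma \ref{lemma_perfect_exact}, which lets one replace any roof $P\xleftarrow{s}A\xrightarrow{a}Q$ by an honest chain map $P\to Q$ unique up to homotopy.

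With the equivalence $\Db(R\proj)\simeq\Dperf(R)$ in place, passing to $K_1$ gives an isomorphism $K_1\big(\Db(R\proj)\big)\cong K_1\big(\Dperf(R)\big)$. Corollary \ref{corollary_example_1} then asserts that the left-hand side is not isomorphic to $K_1(R)$, which yields the desired conclusion.

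The main obstacle is merely keeping the two in-paper definitions straight: $\Db(R\proj)$ is constructed by localizing $\Kb(R\proj)$ at morphisms with acyclic cone, whereas $\Dperf(R)$ is cut out of $\boldD(R)$ by the perfection condition. Matching these two presentations is a classical fact, and once it is invoked the rest of the argument is purely formal.
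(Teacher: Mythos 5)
Your proposal is correct and essentially coincides with the paper's argument: the paper reduces directly to Corollary \ref{corollary_example_0} via the classical equivalence $\Kb(R\proj)\simeq\Dperf(R)$ (citing that equivalences of triangulated categories induce isomorphisms on $K_1$), while you reduce to Corollary \ref{corollary_example_1}, whose proof in the paper is itself just the identification $\Db(R\proj)=\Kb(R\proj)$, so the two routes carry the same content. Your sketch of full faithfulness via the lifting/homotopy-uniqueness property from the remark preceding Lemma \ref{lemma_perfect_exact} is exactly the standard justification the paper implicitly relies on.
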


\begin{proof}
The canonical functor $\Kb(R\proj)\to\Dperf(R)$ is an equivalence of
triangulated categories. Hence $K_1\big(\Kb(R\proj)\big)\cong
K_1\big(\Dperf(R)\big)$ by \cite[Corollary 4.11]{Breuning06}. Thus
Corollary \ref{corollary_example_2} follows from Corollary
\ref{corollary_example_0}.
\end{proof}

\section{Homotopy fibres of monoidal functors}
\label{section_homotopy_fibre}

In this section we summarize the necessary facts about the homotopy
fibre of a monoidal functor of Picard categories. These
constructions and results are well-known (cf.\ \cite{BurnsFlach01},
\cite[\S 5]{BreuningBurns05}), the only difference in our
presentation here is the absence of a fixed unit object.

Let $M=(M,c): \calP\to\calP'$ be a monoidal functor of Picard
categories. The homotopy fibre of $M$ is the Picard category
$\calF(M)$ defined as follows. Objects of $\calF(M)$ are pairs
$(X,\delta)$ where $X$ is an object of $\calP$ and $\delta: M(X)\to
M(X)\otimes M(X)$ is a unit structure on $M(X)$. A morphism
$(X,\delta)\to (Y,\varepsilon)$ in $\calF(M)$ is a morphism $\alpha:
X\to Y$ in $\calP$ such that $\varepsilon\circ
M(\alpha)=(M(\alpha)\otimes M(\alpha))\circ\delta$. The composition
of morphisms in $\calF(M)$ is given by the composition of morphisms
in $\calP$.

The $\otimes$-product of $(X,\delta)$ and $(Y,\varepsilon)$ is
$(X,\delta)\otimes (Y,\varepsilon)=(X\otimes Y,\gamma)$ where
$\gamma$ is induced by the isomorphism $M(X\otimes
Y)\xrightarrow{c_{X,Y}} M(X)\otimes M(Y)$ and the product unit
structure $(M(X),\delta)\otimes (M(Y),\varepsilon)$, i.e.\ $\gamma$
is the composite isomorphism $M(X\otimes Y)\cong M(X)\otimes M(Y)
\xrightarrow{\delta\otimes\varepsilon} \big(M(X)\otimes
M(X)\big)\otimes\big(M(Y)\otimes M(Y)\big)\cong \big(M(X)\otimes
M(Y)\big)\otimes\big(M(X)\otimes M(Y)\big)\cong M(X\otimes Y)\otimes
M(X\otimes Y)$. The $\otimes$-product of two morphisms in $\calF(M)$
is simply the $\otimes$-product of these morphisms in $\calP$.

The AC-structure on $\calF(M)$ is induced by the AC-tensor structure
on $\calP$, i.e.\ $\psi_{(X,\delta),(Y,\varepsilon)}:
(X,\delta)\otimes (Y,\varepsilon)\cong (Y,\varepsilon)\otimes
(X,\delta)$ is given by $\psi_{X,Y}: X\otimes Y\cong Y\otimes X$,
and
$\varphi_{(X,\delta),(Y,\varepsilon),(Z,\gamma)}=\varphi_{X,Y,Z}$.

There exists an obvious monoidal functor $J: \calF(M)\to\calP$ which
sends an object $(X,\delta)$ to $X$ and a morphism $\alpha$ to
$\alpha$. Applying $\pi_i$ to this functor gives homomorphisms
$\pi_i(J): \pi_i(\calF(M))\to\pi_i(\calP)$ for $i=0,1$. Applying
$\pi_i$ to the functor $M:\calP\to\calP'$ gives homomorphisms
$\pi_i(M):\pi_i(\calP)\to\pi_i(\calP')$ for $i=0,1$. Finally there
is a homomorphism $\partial^1: \pi_1(\calP')\to\pi_0(\calF(M))$
which sends $\alpha\in\pi_1(\calP')$ to the isomorphism class of
$(U,\delta)$ where $\gamma: U\cong U\otimes U$ is any unit in
$\calP$ and $\delta$ is the composite isomorphism
$M(U)\xrightarrow{\alpha} M(U)\xrightarrow{M(\gamma)} M(U\otimes
U)\xrightarrow{c_{U,U}} M(U)\otimes M(U)$.

The following lemma is well known and easy to verify.

\begin{lemma}
There is an exact sequence of homotopy groups
\begin{equation*}
0 \to \pi_1(\calF(M))\xrightarrow{\pi_1(J)} \pi_1(\calP)
\xrightarrow{\pi_1(M)} \pi_1(\calP') \xrightarrow{\partial^1}
\pi_0(\calF(M))\xrightarrow{\pi_0(J)} \pi_0(\calP)
\xrightarrow{\pi_0(M)} \pi_0(\calP').
\end{equation*}
\end{lemma}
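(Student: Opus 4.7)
The plan is to verify each of the five exactness statements in the sequence directly from the definitions, using throughout the identifications $\pi_0(\calQ) = \{\text{isomorphism classes of objects}\}$ and $\pi_1(\calQ) = \Aut_\calQ(U)$ for a chosen unit $U$ with unit structure $\gamma : U \to U \otimes U$, applied to each of $\calP$, $\calP'$, and $\calF(M)$.

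I would first dispose of the four ``outer'' exactness statements, all of which are routine unpacking of definitions.
\begin{itemize}
\item At $\pi_1(\calF(M))$: since $J$ is the identity on underlying morphisms in $\calP$, any automorphism of a unit $(U,\delta) \in \calF(M)$ that maps to $\id_U$ was already $\id_{(U,\delta)}$, giving injectivity of $\pi_1(J)$.
\item At $\pi_0(\calP)$: $[X]$ lies in $\ker \pi_0(M)$ iff $M(X)$ admits a unit structure $\delta$, iff $(X,\delta) \in \calF(M)$ with $\pi_0(J)(X,\delta) = [X]$.
\item At $\pi_0(\calF(M))$: if $(X,\delta) \in \ker \pi_0(J)$ then $X$ is isomorphic to a unit, so after replacing $(X,\delta)$ by an isomorphic object in $\calF(M)$ one may assume $X = U$; then $\delta$ differs from the canonical structure $c_{U,U} \circ M(\gamma)$ by a unique $\alpha \in \Aut(M(U)) = \pi_1(\calP')$, and by construction $\partial^1(\alpha) = [(U,\delta)]$.
\item At $\pi_1(\calP)$: if $M(\beta) = \id$ then the compatibility for a morphism in $\calF(M)$ holds trivially, so $\beta$ lifts to an automorphism of $(U,\delta)$ for any $\delta$; conversely, if $\beta$ lifts to some $(U,\delta)$, Picard coherence, which identifies $\alpha \otimes \alpha'$ with $\alpha \cdot \alpha'$ in $\pi_1(\calP')$ through $\delta$, rewrites the compatibility equation as $M(\beta)^2 = M(\beta)$, forcing $M(\beta) = \id$.
\end{itemize}

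The main obstacle, where I would spend most of the effort, is exactness at $\pi_1(\calP')$. Given $\alpha \in \pi_1(\calP')$, the class $\partial^1(\alpha) = [(U,\delta_\alpha)]$ with $\delta_\alpha = c_{U,U} \circ M(\gamma) \circ \alpha$ vanishes in $\pi_0(\calF(M))$ iff some $\beta \in \Aut_\calP(U)$ defines an isomorphism $(U, c_{U,U}\circ M(\gamma)) \to (U,\delta_\alpha)$ in $\calF(M)$. Unpacking the defining compatibility of such a morphism and using monoidality of $M$ reduces this to an equation purely inside $\pi_1(\calP')$, and the Picard-category identity $M(\gamma)^{-1} \circ M(\beta \otimes \beta) \circ M(\gamma) = M(\beta)^2$ collapses it to $\alpha = M(\beta)$. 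This yields the two containments $\ker \partial^1 \subseteq \mathrm{im}\,\pi_1(M)$ and $\mathrm{im}\,\pi_1(M) \subseteq \ker \partial^1$ simultaneously.
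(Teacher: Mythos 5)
Your sketch is correct: the paper itself gives no proof of this lemma (it is dismissed as ``well known and easy to verify''), and your five case-by-case verifications constitute exactly the standard argument one would write out. The only computations carrying real weight --- using the Eckmann--Hilton identification of $\alpha\otimes\alpha$ with $\alpha^{2}$ in $\pi_1(\calP')$ (conjugated through $\delta$ and the monoidal structure of $M$) to reduce the compatibility condition to $M(\beta)=\id$ at $\pi_1(\calP)$ and to $\alpha=M(\beta)$ at $\pi_1(\calP')$ --- are handled correctly.
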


The homotopy fibre and the associated exact sequence of homotopy
groups are functorial in the following sense. Given a diagram
\begin{equation*}
\xymatrix{
\calP \ar[r]^{M} \ar[d]^{A} & \calP' \ar[d]^{B} \\
\calQ \ar[r]^{N} & \calQ' }
\end{equation*}
of Picard categories and monoidal functors, and an isomorphism
$\kappa: B\circ M\to N\circ A$ of monoidal functors, we obtain a
monoidal functor $\calF(M)\to\calF(N)$ of the homotopy fibres, which
sends an object $(X,\delta)$ in $\calF(M)$ to the object
$(A(X),\delta')$ in $\calF(N)$, where $\delta'$ is the composite
$N(A(X))\xrightarrow{\kappa_X^{-1}} B(M(X))\xrightarrow{B(\delta)}
B(M(X)\otimes M(X)) \cong B(M(X))\otimes
B(M(X))\xrightarrow{\kappa_X\otimes\kappa_X} N(A(X))\otimes
N(A(X))$, and a morphism $\alpha: (X,\delta)\to (Y,\varepsilon)$ to
the morphism $A(\alpha)$. It is not difficult to verify that the
induced homomorphisms $\pi_i(\calF(M))\to\pi_i(\calF(N))$ for
$i=0,1$ make the diagram
\begin{equation*}
\xymatrix@C-0.2cm{ 0 \ar[r] & \pi_1(\calF(M)) \ar[r] \ar[d] &
\pi_1(\calP) \ar[r] \ar[d]^{\pi_1(A)} & \pi_1(\calP') \ar[r]
\ar[d]^{\pi_1(B)} & \pi_0(\calF(M)) \ar[r] \ar[d] &
\pi_0(\calP) \ar[r] \ar[d]^{\pi_0(A)} & \pi_0(\calP') \ar[d]^{\pi_0(B)} \\
0 \ar[r] & \pi_1(\calF(N)) \ar[r] & \pi_1(\calQ) \ar[r] &
\pi_1(\calQ') \ar[r] & \pi_0(\calF(N)) \ar[r] & \pi_0(\calQ) \ar[r]
& \pi_0(\calQ') }
\end{equation*}
commutative.

\begin{remark}
Giving the structure of a unit on $M(X)$ is equivalent to giving an
isomorphism $M(X)\to 1_{\calP'}$ where $1_{\calP'}\to
1_{\calP'}\otimes 1_{\calP'}$ is a fixed unit in $\calP'$. Therefore
the homotopy fibre defined above agrees with the fibre product
considered in \cite{BurnsFlach01}.
\end{remark}

\section{Euler characteristics via triangulated categories}
\label{section_Euler_triangulated}

In this section we describe the construction of Euler
characteristics in a relative algebraic $K_0$-group. The
construction here is essentially the same as in
\cite{BreuningBurns05}, except that we work with determinant
functors on triangulated categories of perfect complexes instead of
determinant functors on exact categories of bounded complexes.

Let $R\to S$ be a homomorphism of rings such that $S$ is flat
as right $R$-module. Furthermore we assume that $S$ is regular.
Let $K_0(R,S)$ be the relative algebraic $K$-group which is defined
in terms of generators and relations in \cite[p.\ 215]{Swan68}.

We fix universal determinant functors $g_R: \Dperf(R)\to\calW(R)$ and
$g_S: \Dperf(S)\to\calW(S)$. If $F: \Dperf(R)\to\Dperf(S)$ denotes
the functor given by the scalar extension $P\mapsto S\otimes_R P$
then there exists a monoidal functor $N: \calW(R)\to\calW(S)$ such
that the determinant functors $N\circ g_R$ and $g_S\circ F$ are
isomorphic. We fix such a functor $N$ and isomorphism $\mu: N\circ
g_R\cong g_S\circ F$.

\begin{lemma}
\label{lemma_K_0_D_iso} Let $N: \calW(R)\to\calW(S)$ and $\mu:
N\circ g_R\cong g_S\circ F$ be as above, and let $\calF(N)$ denote
the homotopy fibre of $N$. Then there exists an isomorphism $\eta:
K_0(R,S)\cong \pi_0(\calF(N))$ (depending on $\mu$).
\end{lemma}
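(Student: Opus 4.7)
The plan is to define $\eta$ directly from Swan's generators-and-relations presentation of $K_0(R,S)$, verify that it respects the defining relations, and then prove bijectivity by comparing Swan's exact sequence with the homotopy-fibre exact sequence of \S \ref{section_homotopy_fibre} via Theorem \ref{theorem_intro}.

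On generators, a class $[P,\alpha,Q]\in K_0(R,S)$ is represented by finitely generated projective $R$-modules $P, Q$ together with an $S$-linear isomorphism $\alpha: S\otimes_R P \xrightarrow{\cong} S\otimes_R Q$. Choose an inverse $g_R(Q[0])^{-1}$ of $g_R(Q[0])$ in the Picard category $\calW(R)$ and set $X := g_R(P[0]) \otimes g_R(Q[0])^{-1}$. Combining $\mu_{P[0]}: N(g_R(P[0])) \cong g_S(F(P[0]))$, $\mu_{Q[0]}$, the monoidal structure of $N$, and $g_S(\alpha): g_S(F(P[0])) \cong g_S(F(Q[0]))$ gives a canonical isomorphism $N(X) \cong \mathbf{1}_{\calW(S)}$, which equips $N(X)$ with a unit structure $\delta_\alpha$. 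I set $\eta([P,\alpha,Q]) := [(X,\delta_\alpha)] \in \pi_0(\calF(N))$. A routine verification using the additivity of $g_R$ on short exact sequences, the naturality of $\mu$, and the coherence axioms of the Picard categories shows that the two defining relations of $K_0(R,S)$ — additivity on short-exact-sequence data and $[P,\alpha,Q] + [Q,\beta,T] = [P,\beta\alpha,T]$ — are respected, so $\eta$ is well-defined and a homomorphism.

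For bijectivity, assemble the commutative diagram
\[
\xymatrix@C-0.3cm{
K_1(R) \ar[r] \ar@{->>}[d] & K_1(S) \ar[r] \ar[d]^{\cong} & K_0(R,S) \ar[r] \ar[d]^{\eta} & K_0(R) \ar[r] \ar[d]^{\cong} & K_0(S) \ar[d]^{\cong} \\
K_1(\Dperf(R)) \ar[r] & K_1(\Dperf(S)) \ar[r] & \pi_0(\calF(N)) \ar[r] & K_0(\Dperf(R)) \ar[r] & K_0(\Dperf(S))
}
\]
whose top row is Swan's exact sequence, whose bottom row is the homotopy-fibre exact sequence of \S \ref{section_homotopy_fibre}, and whose vertical identifications outside $\eta$ are those from Theorem \ref{theorem_intro}. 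Commutativity of the outer squares is just the naturality of the canonical maps $K_i(\cdot) \to K_i(\Dperf(\cdot))$ with respect to scalar extension, while commutativity of the two squares adjacent to $\eta$ reduces to the check that the boundary $K_1(S) \to K_0(R,S)$, $[\alpha]\mapsto [R^n,\alpha,R^n]$, agrees under $\eta$ with the boundary $\partial^1$ of \S \ref{section_homotopy_fibre} (both amount to twisting the canonical unit structure by $\alpha$), and that $\pi_0(J)\circ\eta$ sends $[P,\alpha,Q]$ to $[P]-[Q]$. A single application of the five lemma — with the surjectivity of $K_1(R)\to K_1(\Dperf(R))$ in the leftmost column and the other four vertical isomorphisms — then gives that $\eta$ is an isomorphism. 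The main obstacle is the bookkeeping needed to verify the Swan relations and the commutativity of the two squares adjacent to $\eta$; this is routine but requires careful use of the monoidal coherence constraints and of the identifications developed in Section \ref{section_perfect_triangulated}.
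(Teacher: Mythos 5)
Your proposal is correct and follows essentially the same route as the paper: $\eta$ is defined on Swan generators by sending $[P,\alpha,Q]$ to the class of $g_R(P[0])\otimes g_R(Q[0])^{-1}$ equipped with the unit structure built from $\mu$ and $g_S(\alpha)$, and bijectivity is deduced by comparing Swan's exact sequence with the homotopy-fibre sequence via Theorem \ref{theorem_intro} and the five lemma, using surjectivity of $K_1(R)\to K_1(\Dperf(R))$ in the leftmost column and isomorphisms elsewhere. The only cosmetic difference is that you phrase the unit structure as an isomorphism to a fixed unit object, whereas the paper avoids fixing one; by the remark at the end of \S\ref{section_homotopy_fibre} these are equivalent.
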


We will prove Lemma \ref{lemma_K_0_D_iso} below. The isomorphism
$\eta: K_0(R,S)\cong\pi_0(\calF(N))$ allows us to construct
invariants in $K_0(R,S)$ using the determinant functors $g_R$ and
$g_S$. If $C$ is a perfect complex of $R$-modules and $\varepsilon$
is a unit structure on $N(g_R(C))$, then $(g_R(C),\varepsilon)$ is
an object in $\calF(N)$ and therefore has a class in
$\pi_0(\calF(N))\cong K_0(R,S)$. The isomorphism $\mu_C:
N(g_R(C))\cong g_S(S\otimes_R C)$ and the fact that $S$ is regular
allow us to construct a unit structure on $N(g_R(C))$ from certain
information about the cohomology of $S\otimes_R C$. The relevant
properties of the determinant of the cohomology are summarized in
the following lemma (which is proved later in this section).

\begin{lemma}
\label{lemma_tri_cohom} Let $S$ be a regular ring and $g:
\Dperf(S)\to\calP$ a determinant functor. Let $P$ be a perfect
complex of $S$-modules.
\begin{enumerate}
\item Let $H(P)$ denote the cohomology of $P$ considered as a
complex with zero differentials. Then $H(P)$ is a bounded complex
of finitely generated $S$-modules
(and so in particular it lies in $\Dperf(S)$), and there
exists a canonical isomorphism
\begin{equation*}
g(P)\cong g(H(P)).
\end{equation*}
\item Let $H^{\ev}(P)$ resp.\ $H^{\od}(P)$ denote the direct sum of the
even resp.\ odd cohomology of $P$. Then there exists a canonical
isomorphism
\begin{equation*}
g(H(P))\cong g(H^{\ev}(P)[0])\otimes g(H^{\od}(P)[1]).
\end{equation*}
\item There exists a canonical unit structure on
$g(H^{\od}(P)[0])\otimes g(H^{\od}(P)[1])$.
\end{enumerate}
\end{lemma}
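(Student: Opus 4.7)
The plan is to handle the three parts in order, with the main input being that for any object $X$ of $\Dperf(S)$ the shift $X[1]$ behaves as a formal inverse of $X$ under $g$.

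For (1), since $S$ is regular and $P$ is perfect, the cohomology modules $H^n(P)$ are all finitely generated and only finitely many are nonzero; regularity of $S$ then ensures that each $H^n(P)[-n]$ is itself perfect, so $H(P)$ (as a complex with zero differentials, i.e.\ the direct sum of its components placed in their respective degrees) lies in $\Dperf(S)$. To produce the canonical isomorphism, I would use the smart truncation tower $\tau^{\leq n-1}P \to \tau^{\leq n}P$, which fits into a distinguished triangle with cofibre $H^n(P)[-n]$; applying $g_2$ and inducting on $n$ (which terminates since $P$ is bounded) yields $g(P) \cong \bigotimes_n g(H^n(P)[-n])$. The complex $H(P)$ carries an analogous filtration whose successive cofibres are the same objects $H^n(P)[-n]$, but now with split distinguished triangles; this produces an analogous decomposition of $g(H(P))$, and composing the two identifications gives the desired isomorphism.

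For (2), the key ingredient is that for any $X$ in $\Dperf(S)$ there is a canonical isomorphism $g(X[2]) \cong g(X)$. I would derive this from the distinguished triangle $X \xrightarrow{\id} X \to 0 \to X[1]$: rotation produces a distinguished triangle with $0$ in the middle position, and $g_2$ applied to it, combined with the fact that $g(0)$ is a unit of $\calP$, exhibits $g(X[1])$ as a canonical inverse of $g(X)$; iterating gives $g(X[2]) \cong g(X)$. Applied to each summand $H^n(P)[-n]$, this transforms the decomposition from (1) into $\bigotimes_{n \text{ even}} g(H^n(P)[0]) \otimes \bigotimes_{n \text{ odd}} g(H^n(P)[1])$. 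A further inductive use of $g_2$ on the split short exact sequences that assemble $H^{\ev}(P) = \bigoplus_{n \text{ even}} H^n(P)$ and $H^{\od}(P) = \bigoplus_{n \text{ odd}} H^n(P)$ collapses the two factors into $g(H^{\ev}(P)[0])$ and $g(H^{\od}(P)[1])$, respectively. Part (3) is then immediate from the shift identity with $X = H^{\od}(P)[0]$: the distinguished triangle $X \to 0 \to X[1] \to X[1]$ yields $g(H^{\od}(P)[0]) \otimes g(H^{\od}(P)[1]) \cong g(0)$, transporting the unit structure of $g(0)$ to the left-hand side.

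The principal obstacle I anticipate is in establishing that the decomposition in (1) is genuinely canonical: the smart truncation tower is an ordered chain of distinguished triangles, and one must verify that different orderings (or different choices of representative cones) give the same map up to the AC-constraints of $\calP$. This ultimately rests on the octahedral axiom together with the associativity and commutativity axioms satisfied by $g_2$ in the definition of a determinant functor on a triangulated category. A smaller but similar compatibility check is required in (2) to ensure that the shift-by-two isomorphism interacts correctly with direct sum decompositions, so that reshuffling factors between the even and odd parts introduces only canonical isomorphisms.
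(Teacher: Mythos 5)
Your proposal is correct in outline but takes a genuinely different route from the paper. The paper does not argue inside the triangulated category at all: it first pulls $g$ back along the canonical functor $\Cb(S\modfg)\to\Dperf(S)$ to a determinant functor $f$ on the exact category $(\Cb(S\modfg),\qis)$ (regularity is used to ensure every bounded complex of finitely generated modules is perfect), chooses a quasi-isomorphism $a:U\to P$ with $U\in\Cb(S\modfg)$, and then simply quotes three exact-category results of \cite{BreuningBurns05} for $f$ --- the canonical isomorphism $f(U)\cong f(H(U))$ (Proposition 3.1), the even/odd splitting (Proposition 4.4), and the unit structure on $f(M[0])\otimes f(M[1])$ (Lemma 2.3) --- transporting them back to $g$ via $g(a)$ and checking independence of the choice of $U$ and $a$. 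You instead work intrinsically in $\Dperf(S)$: your truncation triangles $\tau^{\leq n-1}P\to\tau^{\leq n}P\to H^n(P)[-n]$ play the role of the filtration argument behind Proposition 3.1, and your rotated triangle $X\to 0\to X[1]\to X[1]$, exhibiting $g(X[1])$ as an inverse of $g(X)$, replaces the acyclic complex $\cone(\id)$ used in Lemma 2.3. Your version buys independence from the choice of a representative $U$ and stays entirely within the triangulated framework, at the cost of having to carry out from scratch the canonicity verifications (independence of the ordering of the truncation tower, and compatibility of the shift-by-two isomorphisms with the commutativity constraint and its signs) using the octahedral and associativity axioms for $g$ --- which is exactly the combinatorial work the paper outsources to the cited exact-category results. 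You correctly identify this as the main remaining obstacle, and the regularity of $S$ does guarantee, as you need, that $\tau^{\leq n}P$ and $H^n(P)[-n]$ lie in $\Dperf(S)$, so no step of your argument actually fails.
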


Using Lemmas \ref{lemma_K_0_D_iso} and \ref{lemma_tri_cohom} we can now define
Euler characteristics in $K_0(R,S)$. To simplify the notation we will
write $C_S$ for $S\otimes_R C$ if $C$ is a complex of $R$-modules.

\begin{definition}
Let $C$ be a perfect complex of $R$-modules and
$t: H^{\ev}(C_S)\xrightarrow{\cong} H^{\od}(C_S)$ an isomorphism of
$S$-modules. We define a unit structure $\varepsilon$ on $N(g_R(C))$
via the composite isomorphism
\begin{equation*}
\begin{split}
N(g_R(C)) & \xrightarrow{\mu_C} g_S(C_S) \\
& \xrightarrow{\cong} g_S(H(C_S)) \\
& \xrightarrow{\cong} g_S(H^{\ev}(C_S)[0])\otimes g_S(H^{\od}(C_S)[1]) \\
& \xrightarrow{g_S(t)\otimes\id} g_S(H^{\od}(C_S)[0])\otimes
g_S(H^{\od}(C_S)[1])
\end{split}
\end{equation*}
and the canonical unit structure on
$g_S(H^{\od}(C_S)[0])\otimes g_S(H^{\od}(C_S)[1])$. Then the Euler
characteristic $\chi^{\tri}(C,t)\in K_0(R,S)$ of the pair
$(C,t)$ is defined to be the isomorphism class of the object
$(g_R(C),\varepsilon)$ in $\pi_0(\calF(N))\xrightarrow{\eta^{-1}}
K_0(R,S)$.
\end{definition}

\begin{lemma}
\label{lemma_Euler_D_indep} The definition of $\chi^{\tri}(C,t)\in
K_0(R,S)$ is independent of all choices.
\end{lemma}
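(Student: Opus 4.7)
The plan is to track the effect on the class $[(g_R(C),\varepsilon)]\in\pi_0(\calF(N))$ of each choice made in the construction, and in each case produce an equivalence between the relevant homotopy fibres that is compatible with the isomorphism $\eta$ of Lemma \ref{lemma_K_0_D_iso} and that sends this class to its counterpart built from the alternative choice. The choices are: (i) the universal determinant functors $g_R$ and $g_S$, and (ii) the monoidal factoring pair $(N,\mu)$. The isomorphisms appearing in Lemma \ref{lemma_tri_cohom} are asserted to be canonical, so they contribute no ambiguity; in particular, they depend only on $g_S$ and not on $N$ or $\mu$.

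I would first dispose of the ambiguity in $(N,\mu)$, holding $g_R$ and $g_S$ fixed. Given two factoring pairs $(N,\mu)$ and $(N',\mu')$, universality of $g_R$ produces a unique monoidal natural isomorphism $\nu: N\to N'$ such that $(\nu\, g_R)\circ\mu=\mu'$. Applying the functoriality of the homotopy fibre recorded at the end of Section \ref{section_homotopy_fibre} with $A=\id_{\calW(R)}$, $B=\id_{\calW(S)}$, and $\kappa=\nu$, I obtain a monoidal equivalence $\Phi:\calF(N)\to\calF(N')$ fitting into the commutative ladder of six-term homotopy exact sequences and covering the identity on $\calW(R)$. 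The isomorphism $\eta$ of Lemma \ref{lemma_K_0_D_iso} will be constructed naturally in $(N,\mu)$, so $\Phi$ intertwines $\eta$ and $\eta'$. Unpacking the formula for $\Phi$ on the second coordinate, one sees that $\Phi(g_R(C),\varepsilon)=(g_R(C),\varepsilon')$: the composite defining $\varepsilon'$ differs from the one defining $\varepsilon$ only in that $\mu_C$ is replaced by $\mu'_C=\nu_{g_R(C)}\circ\mu_C$, and the extra $\nu_{g_R(C)}$ is exactly the correction that $\Phi$ introduces. Hence the two classes in $K_0(R,S)$ agree.

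To handle the ambiguity in $g_R$ and $g_S$, I would invoke the uniqueness-up-to-unique-isomorphism of universal determinant functors: any two such choices are connected by monoidal equivalences $E:\calW(R)\to\tilde\calW(R)$ and $F:\calW(S)\to\tilde\calW(S)$, and any factoring pair $(N,\mu)$ transports to a factoring pair $(\tilde N,\tilde\mu)$ together with a canonical monoidal isomorphism $F\circ N\cong\tilde N\circ E$. Feeding this square into the functoriality of homotopy fibres yields an equivalence $\calF(N)\to\calF(\tilde N)$ again compatible with the $\eta$'s, and by the same bookkeeping it sends $(g_R(C),\varepsilon)$ to $(\tilde g_R(C),\tilde\varepsilon)$.

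The main obstacle I foresee is the detailed verification in the first step that the transport rule for unit structures under $\Phi$ matches the prescribed composite defining $\varepsilon'$. This is not deep, but it requires unwinding both the construction of $\Phi$ from Section \ref{section_homotopy_fibre} and the long composite used to define $\varepsilon$, and checking that every factor other than $\mu_C$ (in particular the canonical isomorphisms from Lemma \ref{lemma_tri_cohom} and the map $g_S(t)\otimes\id$) lies entirely within $\calW(S)$ and is therefore untouched by the change from $(N,\mu)$ to $(N',\mu')$.
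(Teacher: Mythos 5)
Your proposal is correct and follows essentially the same route as the paper: both arguments rest on producing a monoidal isomorphism $B\circ N\cong N'\circ A$ from the universality of $g_R$, feeding it into the functoriality of the homotopy fibre from \S\ref{section_homotopy_fibre} to get $Z:\calF(N)\to\calF(N')$, checking $\eta_{N',\mu'}=\pi_0(Z)\circ\eta_{N,\mu}$, and verifying that $Z$ carries $(g_R(C),\varepsilon)$ to $(g'_R(C),\varepsilon')$. The only difference is organizational: you split the argument into the two cases ``vary $(N,\mu)$ with $g_R,g_S$ fixed'' and ``vary $g_R,g_S$,'' whereas the paper treats all choices simultaneously with a single functor $Z$; the composite of your two steps covers the general case.
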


\begin{lemma}
\label{lemma_agreement} Let $\chi(C,t)$ be the Euler characteristic
defined in \cite[Definition 5.5]{BreuningBurns05}. Then
$\chi^{\tri}(C,t)=\chi(C,t)$.
\end{lemma}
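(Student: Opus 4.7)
My plan is to exhibit a commutative diagram of Picard categories relating the exact-category determinant functors of \cite{BreuningBurns05} to the triangulated ones used here, to check that the induced map on homotopy fibres is an isomorphism on $\pi_0$, and then to verify that this isomorphism intertwines the two constructions of the Euler characteristic. Let $f_R:(\Cperf(R),\qis)\to\calV(R)$ and $f_S:(\Cperf(S),\qis)\to\calV(S)$ be the universal determinant functors underlying the construction of $\chi(C,t)$ in \cite{BreuningBurns05}, and let $Q_R:\calV(R)\to\calW(R)$ and $Q_S:\calV(S)\to\calW(S)$ be the quotient monoidal functors built in the proof of Proposition \ref{proposition_perf_universal}, so that $g_R\circ I\cong Q_R\circ f_R$ and $g_S\circ I\cong Q_S\circ f_S$. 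Since scalar extension $F=S\otimes_R-:\Cperf(R)\to\Cperf(S)$ is exact (by flatness of $S$), the universal property of $f_R$ yields a monoidal functor $M:\calV(R)\to\calV(S)$ with $M\circ f_R\cong f_S\circ F$, and a further appeal to universality gives a canonical isomorphism $Q_S\circ M\cong N\circ Q_R$.

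The functoriality of homotopy fibres recalled in \S\ref{section_homotopy_fibre} then supplies a monoidal functor $\calF(M)\to\calF(N)$ together with a ladder of long exact sequences of homotopy groups. Applying the five lemma, and using that $\pi_0(Q_R)$ and $\pi_0(Q_S)$ are bijective and $\pi_1(Q_R)$ is surjective (Proposition \ref{proposition_perf_universal}), together with $\pi_1(Q_S)$ bijective (Proposition \ref{proposition_perf_regular}, as $S$ is regular), shows that the induced map $\pi_0(\calF(M))\to\pi_0(\calF(N))$ is an isomorphism. Inspection on explicit generators $(P,\phi,Q)$ of $K_0(R,S)$ shows that the isomorphism $\eta$ of Lemma \ref{lemma_K_0_D_iso} agrees with the composite of the identification $K_0(R,S)\cong\pi_0(\calF(M))$ from \cite{BreuningBurns05} with this induced map; equivalently, the two presentations of $K_0(R,S)$ are intertwined by the induced functor of homotopy fibres.

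Granted this intertwining, it suffices to prove that the functor $\calF(M)\to\calF(N)$ sends the representing object $(f_R(C),\varepsilon)$ of $\chi(C,t)$ to an object isomorphic to the representing object $(g_R(C),\varepsilon^{\tri})$ of $\chi^{\tri}(C,t)$. On underlying objects this is immediate since $Q_R(f_R(C))\cong g_R(C)$. For the unit structures, the key point is that $Q_S$ intertwines the exact-category isomorphism $f_S(C_S)\cong f_S(H(C_S))$ of \cite{BreuningBurns05} with the canonical triangulated isomorphism of Lemma \ref{lemma_tri_cohom}(1): both are built from the stupid-truncation filtration of $C_S$, whose short exact sequences in $\Cperf(S)$ are precisely the ones that, under $I$, give rise to the distinguished triangles used on the triangulated side. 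The corresponding identifications of the even/odd decomposition and the canonical unit on $g_S(H^{\od}(C_S)[0])\otimes g_S(H^{\od}(C_S)[1])$, i.e.\ parts (2) and (3) of Lemma \ref{lemma_tri_cohom}, are matched with their exact-category analogues by the same argument.

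The main obstacle is this last step: carefully matching the cohomologically defined unit structures $\varepsilon$ and $\varepsilon^{\tri}$ after passage through $Q_S$. The structural parts of the argument — the commutative square of Picard categories, the five lemma calculation, and the compatibility of the two identifications of $K_0(R,S)$ — are essentially forced by universality, but checking that the canonical isomorphisms of Lemma \ref{lemma_tri_cohom} really coincide with their exact-category counterparts requires fixing a specific model of the truncation filtration and tracking the intermediate canonical isomorphisms through the quotient monoidal functor $Q_S$.
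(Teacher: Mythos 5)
Your proposal is correct and follows essentially the same route as the paper: build the comparison square of Picard categories from universality, pass to the induced functor on homotopy fibres, check on generators that it intertwines the two identifications of $K_0(R,S)$, and match the representing objects together with their unit structures (the last step being, as you note, where the actual work lies, and it is eased by the fact that the canonical isomorphisms of Lemma \ref{lemma_tri_cohom} are \emph{defined} by pulling $g_S$ back to a determinant functor on an exact category of complexes). The only differences are cosmetic: the paper works with $\Cb(R\proj)$ and $\Cb(S\modfg)$ (hence first reduces to $C$ a bounded complex of finitely generated projectives) and with abstract comparison functors $A,B$ rather than the explicit quotients $Q_R,Q_S$, and it does not need your five-lemma verification since $\pi_0(Z)$ being an isomorphism already follows from $\eta=\pi_0(Z)\circ\xi$.
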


We remark that Lemma \ref{lemma_Euler_D_indep} follows immediately from
Lemma \ref{lemma_agreement} and the independence of $\chi(C,t)$ of all
choices. However the latter independence was only stated without
proof in \cite[Remark 5.3]{BreuningBurns05}, and we will therefore include a
complete proof of Lemma \ref{lemma_Euler_D_indep} below.

\begin{proof}[Proof of Lemma \ref{lemma_K_0_D_iso}]
This proof is similar to \cite[Lemma 5.1]{BreuningBurns05}. We
define a homomorphism $\eta: K_0(R,S)\to \pi_0(\calF(N))$ by sending
a generator $(P,a,Q)$ to the isomorphism class of $(g_R(P[0])\otimes
g_R(Q[0])^{-1},\delta)$, where the unit structure $\delta$ on
$N(g_R(P[0])\otimes g_R(Q[0])^{-1})$ is obtained via the composite
isomorphism
\begin{equation*}
\begin{split}
N(g_R(P[0])\otimes g_R(Q[0])^{-1}) & \xrightarrow{\cong}
N(g_R(P[0]))\otimes N(g_R(Q[0]))^{-1} \\
& \xrightarrow{\mu} g_S(S\otimes_R P[0])\otimes g_S(S\otimes_R
Q[0])^{-1} \\
& \xrightarrow{g_S(a[0])\otimes\id} g_S(S\otimes_R Q[0])\otimes
g_S(S\otimes_R Q[0])^{-1}
\end{split}
\end{equation*}
(where $\mu$ denotes the isomorphism induced by $\mu_{P[0]}: N(g_R(P[0]))\cong g_S(S\otimes_R P[0])$
and $\mu_{Q[0]}: N(g_R(Q[0]))\cong g_S(S\otimes_R Q[0])$)
and the canonical unit structure on $g_S(S\otimes_R Q[0])\otimes
g_S(S\otimes_R Q[0])^{-1}$. We now have a commutative diagram with
exact rows
\begin{equation*}
\xymatrix@C-0.3cm{ K_1(R) \ar[r] \ar[d] & K_1(S) \ar[r] \ar[d] &
K_0(R,S)
\ar[r] \ar[d]^{\eta} & K_0(R) \ar[r] \ar[d] & K_0(S) \ar[d] \\
K_1(\Dperf(R)) \ar[r] & K_1(\Dperf(S)) \ar[r] & \pi_0(\calF(N))
\ar[r] & K_0(\Dperf(R)) \ar[r] & K_0(\Dperf(S)) }
\end{equation*}
in which the unlabeled vertical maps are the canonical homomorphisms
from Theorem \ref{theorem_intro}. Since the first vertical map is
surjective and the second, fourth and fifth vertical maps are
isomorphisms, the 5-Lemma implies that $\eta$ is an isomorphism.
\end{proof}

\begin{proof}[Proof of Lemma \ref{lemma_tri_cohom}]
The canonical functor $\Cb(S\modfg)\to\Dperf(S)$ induces a functor
\begin{equation*}
\det\big(\Dperf(S),\calP\big) \longrightarrow
\det\big((\Cb(S\modfg),\qis),\calP\big)
\end{equation*}
(this is the composite of the functor $\det\big(\Dperf(S),\calP\big)
\to \det\big((\Cperf(S),\qis),\calP\big)$ from Lemma
\ref{lemma_C_D_perf_functor} and the functor
$\det\big((\Cperf(S),\qis),\calP\big) \to
\det\big((\Cb(S\modfg),\qis),\calP\big)$ induced by
$\Cb(S\modfg)\to\Cperf(S)$). Let $f: (\Cb(S\modfg),\qis)\to\calP$
denote the image of $g: \Dperf(S)\to\calP$ under this functor.
\begin{enumerate}
\item Choose a complex $U$ in $\Cb(S\modfg)$ together with a
quasi-isomorphism $a: U\to P$. Then we get an isomorphism $H(a):
H(U)\to H(P)$, and it is clear that $H(U)$ (and hence $H(P)$) lies
in $\Cb(S\modfg)$. Now
\cite[Proposition 3.1]{BreuningBurns05} shows that there exists a
canonical isomorphism $f(U)\xrightarrow{\cong} f(H(U))$. Hence we
obtain an isomorphism $g(P)\xrightarrow{\cong} g(H(P))$ as the composite
\begin{equation*}
g(P)\xrightarrow{g(a)^{-1}}
g(U)=f(U)\xrightarrow{\cong} f(H(U))=g(H(U))\xrightarrow{g(H(a))} g(H(P)).
\end{equation*}
One easily checks that this composite isomorphism is independent of the
choice of $U$ and $a$.
\item This follows from the canonical isomorphism
$f(H(P))\cong f(H^{\ev}(P)[0])\otimes f(H^{\od}(P)[1])$,
see \cite[Proposition 4.4]{BreuningBurns05}.
\item This follows from the canonical unit structure on
$f(H^{\od}(P)[0])\otimes f(H^{\od}(P)[1])$,
see \cite[Lemma 2.3]{BreuningBurns05}.\qedhere
\end{enumerate}
\end{proof}

\begin{proof}[Proof of Lemma \ref{lemma_Euler_D_indep}]
Suppose that $g'_R: \Dperf(R)\to\calW'(R)$ and $g'_S:
\Dperf(S)\to\calW'(S)$ are also universal determinant functor, $N':
\calW'(R)\to\calW'(S)$ is a monoidal functor, and $\mu': N'\circ
g'_R\cong g'_S\circ F$ is an isomorphism. Then we can choose a
monoidal functor $A: \calW(R)\to\calW'(R)$ together with an
isomorphism $A\circ g_R\cong g'_R$, and a monoidal functor $B:
\calW(S)\to\calW'(S)$ together with an isomorphism $B\circ g_S\cong
g'_S$. So we have a diagram of triangulated/Picard categories and
exact/determinant/monoidal functors as follows.
\begin{equation*}
\xymatrix{ \Dperf(R) \ar[rr]^{F} \ar[dr]^{g_R} \ar[dd]_{g'_R} & &
\Dperf(S) \ar[dr]^{g_S} \ar'[d][dd]_(0.4){g'_S} & \\
& \calW(R) \ar[rr]^(0.3){N} \ar[dl]^{A} & & \calW(S) \ar[dl]^{B} \\
\calW'(R) \ar[rr]^{N'} & & \calW'(S) & }
\end{equation*}

From $\mu$, $\mu'$ and the isomorphism of the triangles we obtain an
isomorphism of determinant functors
\begin{equation*}
\begin{split}
\Dperf(R) & \xrightarrow{g_R} \calW(R) \xrightarrow{N} \calW(S)
\xrightarrow{B} \calW'(S), \\
\Dperf(R) & \xrightarrow{g_R} \calW(R) \xrightarrow{A} \calW'(R)
\xrightarrow{N'} \calW'(S),
\end{split}
\end{equation*}
and since $g_R$ is universal
this isomorphism of determinant functors comes from
an isomorphism of monoidal functors $B\circ N\cong N'\circ
A$. Therefore by the results in \S \ref{section_homotopy_fibre} we
get an induced functor $Z:\calF(N)\to\calF(N')$ and thus a
homomorphism $\pi_0(Z): \pi_0(\calF(N))\to\pi_0(\calF(N'))$. One
easily sees that $\pi_0(Z)$ is in fact an isomorphism.

We now write $\eta_{N,\mu}: K_0(R,S)\to\pi_0(\calF(N))$ for the
isomorphism from Lemma \ref{lemma_K_0_D_iso} associated to $N$
and $\mu$, and $\eta_{N',\mu'}: K_0(R,S)\to\pi_0(\calF(N'))$ for the
isomorphism associated to $N'$ and $\mu'$. It is not difficult to
verify that
\begin{equation}
\label{equation_euler_tri_indep}
\eta_{N',\mu'}=\pi_0(Z)\circ\eta_{N,\mu}.
\end{equation}

On the other hand, let $(g_R(C),\varepsilon)$ be the object in
$\calF(N)$ which occurs in the construction of the Euler
characteristic with respect to $N$ and $\mu$, and let
$(g'_R(C),\varepsilon')$ be the object in $\calF(N')$ constructed
with respect to $N'$ and $\mu'$. It is then easy to see that
$Z((g_R(C),\varepsilon))\cong (g'_R(C),\varepsilon')$. Hence
$\pi_0(Z)$ sends the isomorphism class of $(g_R(C),\varepsilon)$ to
the isomorphism class of $(g'_R(C),\varepsilon')$. Together with
(\ref{equation_euler_tri_indep}) this implies the independence of
$\chi^{\tri}(C,t)$.
\end{proof}

\begin{proof}[Proof of Lemma \ref{lemma_agreement}]
It is easy to check that if $a: U\to C$ is a quasi-isomorphism and
$t': H^{\ev}(U_S)\to H^{\od}(U_S)$ is the composite isomorphism
\begin{equation*}
H^{\ev}(U_S)\xrightarrow{H^{\ev}(a_S)} H^{\ev}(C_S)\xrightarrow{t}
H^{\od}(C_S)\xrightarrow{H^{\od}(a_S)^{-1}} H^{\od}(U_S),
\end{equation*}
then $\chi^{\tri}(C,t)=\chi^{\tri}(U,t')$ and
$\chi(C,t)=\chi(U,t')$. Therefore we can assume from now on that
$C$ is a bounded complex of finitely generated projective $R$-modules.

Let $f_R: (\Cb(R\proj),\qis) \to \calV(R)$ and $f_S:
(\Cb(S\modfg),\qis) \to \calV(S)$ be universal determinant functors.
Let $E: \Cb(R\proj)\to\Cb(S\modfg)$ be the exact functor given by
$P\mapsto S\otimes_R P$, and let $M: \calV(R)\to\calV(S)$ be a
monoidal functor such that there exists an isomorphism of
determinant functors $\lambda: M\circ f_R\cong f_S\circ E$. Then by
\cite[Lemma 5.1]{BreuningBurns05} there exists an isomorphism $\xi:
K_0(R,S)\cong\pi_0(\calF(M))$ (depending on $\lambda$).

We write $I_R$ for the canonical functor $\Cb(R\proj)\to\Dperf(R)$.
Then there exists a monoidal functor $A: \calV(R)\to\calW(R)$ such
that the determinant functors $g_R\circ I_R$ and $A\circ f_R$ are
isomorphic. We fix such a functor $A$ and isomorphism of determinant
functors. Similarly, if $I_S:\Cb(S\modfg)\to\Dperf(S)$ is the
canonical functor, we can fix a monoidal functor $B:
\calV(S)\to\calW(S)$ and isomorphism $g_S\circ I_S\cong B\circ f_S$.

We now have the following diagram of exact/triangulated/Picard
categories and exact/determinant/monoidal functors.
\begin{equation*}
\xymatrix{ (\Cb(R\proj),\qis) \ar[rr]^{E} \ar[dd]^(0.7){I_R}
\ar[dr]^{f_R} & &
(\Cb(S\modfg),\qis) \ar'[d][dd]^(0.4){I_S} \ar[dr]^{f_S} & \\
& \calV(R) \ar[rr]^(0.35){M} \ar[dd]^(0.7){A} & &
\calV(S) \ar[dd]^(0.7){B} \\
\Dperf(R) \ar'[r][rr]^(0.35){F} \ar[dr]^{g_R} & &
\Dperf(S) \ar[dr]^{g_S} & \\
& \calW(R) \ar[rr]^{N} & & \calW(S) }
\end{equation*}
The square at the back is commutative. The squares on the left,
right, top and bottom are commutative up to fixed isomorphisms of
determinant functors. Hence we obtain an isomorphism of determinant
functors
\begin{gather*}
(\Cb(R\proj),\qis) \xrightarrow{f_R} \calV(R)
\xrightarrow{M} \calV(S) \xrightarrow{B} \calW(S), \\
(\Cb(R\proj),\qis) \xrightarrow{f_R} \calV(R) \xrightarrow{A} \calW(R)
\xrightarrow{N} \calW(S),
\end{gather*}
and thus (since $f_R$ is universal) also an isomorphism of monoidal
functors $B\circ M\cong N\circ A$. As shown in \S
\ref{section_homotopy_fibre} this implies that there is a monoidal
functor $Z: \calF(M)\to\calF(N)$. It follows easily from the proofs
of Lemma \ref{lemma_K_0_D_iso} and \cite[Lemma 5.1]{BreuningBurns05}
that
\begin{equation}
\label{equation_agree_1}
\eta=\pi_0(Z)\circ\xi.
\end{equation}

Now recall that the Euler characteristic $\chi(C,t)\in K_0(R,S)$ is defined
to be the isomorphism class of $(f_R(C),\delta)$ in
$\pi_0(\calF(M))\xrightarrow{\xi^{-1}} K_0(R,S)$, where $\delta$ is the unit
structure on $M(f_R(C))$ coming from the isomorphism
\begin{equation*}
\begin{split}
M(f_R(C)) \xrightarrow{\lambda_C} f_S(C_S) & \xrightarrow{\cong}
f_S(H(C_S))
\xrightarrow{\cong} f_S(H^{\ev}(C_S)[0])\otimes f_S(H^{\od}(C_S)[1]) \\
& \xrightarrow{f_S(t)\otimes\id} f_S(H^{\od}(C_S)[0])\otimes f_S(H^{\od}(C_S)[1])
\end{split}
\end{equation*}
and the canonical unit structure on
$f_S(H^{\od}(C_S)[0])\otimes f_S(H^{\od}(C_S)[1])$.

It is straightforward to check that
$Z((f_R(C),\delta))\cong (g_R(C),\varepsilon)$ where
$(g_R(C),\varepsilon)$ is the object in $\calF(N)$ which is used in the
definition of $\chi^{\tri}(C,t)$. Because of (\ref{equation_agree_1})
this implies that $\chi^{\tri}(C,t)=\chi(C,t)$
as required.
\end{proof}


\end{document}